\pgfplotsset{compat=newest}
\definecolor{rwthBlue}{RGB}{0,84,159}
\definecolor{rwthLightBlue}{RGB}{142,186,229}
\definecolor{rwthOrange}{RGB}{246,168,0}
\definecolor{rwthGreen}{RGB}{87,171,39}
\definecolor{rwthDarkGreen}{RGB}{0,97,101}
\definecolor{rwthLightGreen}{RGB}{189,205,0}
\definecolor{rwthCyan}{RGB}{0,152,161}
\definecolor{rwthRed}{RGB}{204,7,30}
\definecolor{rwthDarkRed}{RGB}{161,16,53}
\definecolor{rwthLila}{RGB}{122,111,172}
\definecolor{rwthPurple}{RGB}{97,33,88}
\definecolor{rwthYellow}{RGB}{255,237,0}
\definecolor{IGPMred}{RGB}{156,12,16}
\definecolor{IGPMblue}{RGB}{0,105,164}
\newcounter{assum}
\renewcommand*{\theassum}{A\arabic{assum}}
\newcommand{\bu} {\mathbf{u}}
\newcommand{\bw} {\mathbf{w}}
\newcommand{\bp} {\mathbf{p}}
\newcommand{\br} {\mathbf{r}}
\newcommand{\bA} {\mathrm{A}}
\newcommand{\bM} {\mathrm{M}}
\newcommand{\bD} {\mathrm{D}}
\newcommand{\la} {\langle}
\newcommand{\ra} {\rangle}
\newcommand{\cW} {{\cal W}}
\newcommand{\cV} {{\cal V}}
\newcommand{\cIG}{\mathcal{I}^\Gamma}
\newcommand{\R} {\mathbb{R}}
\newcommand{\T} {\mathcal{T}}
\newcommand{\bv} {\mathbf{v}}
\newtheorem{remark}{Remark}
\newtheorem{assumption}{Assumption}
\newcommand{\jumpleft}{[\![}
\newcommand{\jumpright}{]\!]}
\newcommand{\jump}[1]{\jumpleft #1 \jumpright}
\newcommand{\vv}[1]{\mathbf{#1}}        
\newcommand{\BrokenOmega}{\Omega_{1}\cup \Omega_2}
\newcommand{\averageleft}{\{\!\!\{}
\newcommand{\averageright}{\!\}\!\!\}}
\newcommand{\average}[1]{\averageleft \! #1 \averageright}
\newcommand{\vertiii}[1]{{\left\vert\kern-0.25ex\left\vert\kern-0.25ex\left\vert #1 
    \right\vert\kern-0.25ex\right\vert\kern-0.25ex\right\vert}}
\newcommand{\cR}{\ensuremath{\mathcal{R}}}
\newcommand{\xferest}{\ensuremath{\mathcal{R}}}
\newcommand{\mone}{\ensuremath{{-1}}}
\newcommand{\lmone}{\ensuremath{{\ell-1}}}
\newcommand{\Pellg}{\ensuremath{P_\ell^\Gamma}}
\newcommand{\uellg}{\ensuremath{u_{h_\ell}}}
\newcommand{\Vellg}{\ensuremath{V_\ell^\Gamma}}
\newcommand{\Vellmg}{\ensuremath{V_{\ell-1}^\Gamma}}
\newcommand{\Vellgl}{\ensuremath{V_\ell^{\Gamma_\ell}}}
\newcommand{\Vellmgl}{\ensuremath{V_{\ell-1}^{\Gamma_\lmone}}}
\newcommand{\Velli}[1]{\ensuremath{V_{\ell,#1}}}
\newcommand{\Vellmi}[1]{\ensuremath{V_{\ell-1,#1}}}
\newcommand{\Omegaie}{\ensuremath{\Omega^e_{\ell,i}}}
\newcommand{\phielli}{\ensuremath{\varphi^k_{\ell,i}}}
\newcommand{\Tell}{\ensuremath{\mathcal{T}_{\ell}}}
\newcommand{\Tellg}{\ensuremath{\mathcal{T}^\Gamma_{\ell}}}
\newcommand{\Iellg}{\ensuremath{I_\ell^\Gamma}}
\newcommand{\Iellgi}[1]{\ensuremath{I_{\ell,#1}^\Gamma}}
\def\b|{{\|\hskip-0.16ex|}}
\title{A multigrid method for unfitted finite element discretizations of  elliptic interface problems}
\author{Thomas Ludescher\thanks{Institut f\"ur
Geometrie und Praktische  Mathematik, RWTH Aachen University, D-52056 Aachen,
Germany; email: {\tt ludescher@igpm.rwth-aachen.de}}
\and Sven Gross\thanks{Institut f\"ur
Geometrie und Praktische  Mathematik, RWTH Aachen University, D-52056 Aachen,
Germany; email: {\tt gross@igpm.rwth-aachen.de}} 
\and Arnold Reusken\thanks{Institut f\"ur
Geometrie und Praktische  Mathematik, RWTH Aachen University, D-52056 Aachen,
Germany; email: {\tt reusken@igpm.rwth-aachen.de}} }
\begin{document}
{

\maketitle

\begin{abstract}
We consider discrete  Poisson interface problems resulting from  linear unfitted finite elements, also called cut finite elements (CutFEM). Three of these unfitted finite element  methods known from the literature are studied. All three methods  rely on Nitsche's method to incorporate the interface conditions. The main topic of the paper is the development of a multigrid method, based on a novel prolongation operator for the unfitted finite element space and an interface smoother that is designed to yield robustness for large jumps in the diffusion coefficients. Numerical results are presented which illustrate efficiency of this multigrid method and demonstrate  its robustness properties with respect to variation of the mesh size, location of the interface and contrast in the diffusion coefficients.
\end{abstract}
\begin{AMS} 65N30, 65F10, 65N22, 65N55   	
\end{AMS}

\begin{keywords}
  Geometric multigrid, unfitted finite elements, interface problem, Nitsche, CutFEM
\end{keywords}

\section{Introduction}
In this paper we consider the elliptic interface problem
\begin{subequations}\label{eq:ellipIf}
\begin{align}
 -\nabla \cdot (\mu \nabla u ) &= f &&\text{in } \Omega_i, ~i=1,2 \label{eq:ellipIf1}\\
 u &= u_D &&\text{on } \partial\Omega \\
 \jump{u} &= 0 &&\text{on } \Gamma  \label{eq:ellipIf3}\\
 \jump{-\mu\nabla u \cdot \vv{n} } &= 0 &&\text{on } \Gamma,  \label{eq:ellipIf4}
\end{align}
\end{subequations}
on a polygonal domain $\Omega = \Omega_1 \cup \Gamma \cup \Omega_2 \in \R^d$, $d=2,3$. The subdomains $\Omega_1$ and $\Omega_2$ are separated by the interface $\Gamma$, i.e., $\Gamma =\overline \Omega_1 \cap \overline \Omega_2$.  For simplicity, we assume $\Gamma$ to be  connected. 
The diffusion coefficient $\mu > 0$ is assumed to be  piecewise constant, $\mu(x)=\mu_i=\text{const}$ on $\Omega_i$, with $\mu_1 \neq \mu_2$. The solution of problem \eqref{eq:ellipIf} then exhibits a kink at the interface $\Gamma$. 
For a mesh which is \emph{not} aligned to the interface, standard finite element methods can not approximate  the solution $u$ accurately. Starting with the method introduced in \cite{Hansbo02}, based on Nitsche's method to incorporate the interface conditions \cite{nitsche1971variationsprinzip}, several \emph{un}fitted finite element methods, also called CutFEM, have been studied. These methods yield an optimal 
order of convergence for this class of elliptic interface problems. We refer to the recent overview paper \cite{burman2015cutfem} and the references therein for a detailed discussion of these methods. 

In this paper we introduce and analyze an efficient multigrid method for a class of unfitted finite element methods. We restrict to piecewise linear finite elements. Although several different unfitted finite element methods have been developed (cf. \cite{burman2015cutfem}), almost all of these methods use the same unfitted finite element space $V_\ell^\Gamma$, defined in \eqref{eq:vgammafull}. From the class of methods using this space we consider the following  three important representatives. Firstly, as discretization method we use the original Nitsche unfitted finite element method \cite{Hansbo02}, which was the starting point for the development of several variants. In this method one has to select an appropriate value for a stabilization parameter. The second unfitted finite element Nitsche type method that we consider, introduced in  \cite{lehrenfeld2016removing}, is parameter-free. These first two methods are not robust with respect to large jumps in the diffusion coefficient $\mu$. Therefore, as 
third representative we use the method from \cite{burman2011numerical}, which has  strong robustness properties w.r.t. variation of the coefficient jump $\mu_1/\mu_2$. 
This third method uses a so-called   ghost penalty stabilization \cite{Burman}, which again requires the selection of a stabilization parameter value. 
We mention two further papers \cite{annavarapu2012robust,barrau2012robust} in which  variants of these unfitted finite element methods for interface problems are studied.  In these papers  a particular choice of weights for the averaging operator (used in the Nitsche method) is introduced which leads to very strong robustness properties both  with respect to large jumps in $\mu$  and the interface location. It turns out that for certain cut configurations and  specific diffusion coefficient ratios  the Nitsche penalty parameter becomes arbitrarily large (causing a corresponding condition number blow up for the stiffness matrix).  This is not the case for the method from \cite{Burman}, and therefore we focus on that method.

Unfitted finite element methods are rather popular in fracture mechanics \cite{fries2010extended} (in that community these are often called extended finite element methods, XFEM). In \cite{berger2012inexact,gerstenberger2013algebraic},  algebraic multigrid methods for that  problem class are developed. The performance of these methods for the interface problem \eqref{eq:ellipIf3}-\eqref{eq:ellipIf4} has not been studied. 

It is well known (cf.~\cite{burman2015cutfem}) that the stiffness matrices resulting from such unfitted finite element discretization methods may have a blow up in the condition number due to an instability caused by arbitrarily small cuts. This blow up effect can be avoided by a ghost penalty stabilization. As known from the literature, the methods from \cite{Hansbo02,lehrenfeld2016removing} may suffer from this blow up effect, whereas (due to the ghost penalty stabilization) the method from  \cite{ burman2011numerical} does not have this drawback. As we will see in this paper, the multigrid method that we propose does not deteriorate if the condition number blows up due to small cuts. 

There are only very  few papers that we know of in which efficient iterative solvers for the discrete interface problems resulting from  unfitted finite element methods  are treated.
In \cite{lehrenfeld2017optimal} a preconditioner for the original Nitsche unfitted finite element method \cite{Hansbo02} for \eqref{eq:ellipIf} is developed which relies on a stable subspace splitting of the space $V_\ell^\Gamma$. This space is split into the standard linear finite element space and a complement spanned by the  cut basis functions.  A block diagonal preconditioner is proposed that leads to a bounded condition number uniformly in the mesh size and the location of the interface. The condition number, however, deteriorates for larger jumps in the diffusion coefficient.  

There are several papers in which multigrid methods  are presented for discretizations of \eqref{eq:ellipIf} that differ from  the unfitted finite element methods considered in this paper. We mention a few of these. 
In \cite{adams2002immersed,adams2004new} a multigrid solver for  the immersed interface  method (IIM) \cite{leveque1994immersed} is presented, which  uses nine point stencils
on regular grids and a modified interpolation. A Krylov-accelerated multigrid method combined with the IIM is presented in \cite{chen2008piecewise}.   In \cite{li1998fast} a preconditioner for the IIM is developed that makes use of fast Poisson solvers. Recently, in  \cite{jo2018geometric} a geometric multigrid method has been studied for a finite element variant (on structured grids) of the IIM.
For the special and simpler case, where the interface is aligned with the grid, a multigrid method with a specific coarsening strategy, such that the interface is aligned with coarse grids as well, is proposed in \cite{wan2000interface}.  A multilevel preconditioner for a discontinuous Galerkin discretization of large contrast elliptic interface  problems, with an interface aligned to the grid, is presented in \cite{ayuso2014multilevel}. 

There are a few papers in which fast  solvers for unfitted finite element methods in the setting of fictituous domain techniques are treated, e.g. \cite{badia2017robust,de2017preconditioning}.

To our knowledge an efficient  geometric multigrid method for a class of unfitted finite element discretizations of \eqref{eq:ellipIf} has not been treated in the literature. In this paper we present  such a method. In this method there are two key ingredients that make it different from a standard multigrid method (for the Poisson equation).
The first one is  a novel prolongation operator that is tailor-made for the non-nested hierarchy of unfitted finite element spaces. Furthermore, for the case with large jumps in the diffusion  coefficient we  propose a smoothing procedure based on  a combination of a standard Gauss-Seidel smoother with a specific local (close to the interface) correction procedure. This then results in a multigrid solver which has arithmetic costs per iteration  comparable to a few matrix-vector multiplications and a contraction number (much) smaller than one uniformly in the mesh size, the location of the interface and the jump in the diffusion coefficient (where the latter property holds only for the robust discretization method from \cite{ burman2011numerical}). This optimality property is derived from numerical experiments for the three unfitted finite element discretizations outlined above. 
  
The paper is organized as follows.  In section \ref{sec:disc} we introduce the three discretization methods from \cite{Hansbo02,lehrenfeld2016removing,burman2011numerical} that are considered. In section~\ref{sectMG} we explain the multigrid method. In particular the new prolongation operator and a special (robust) smoother are introduced. The performance of the proposed multigrid method for the different unfitted finite element discretization methods  is studied  in section \ref{sec:exp}.

\section{Unfitted finite element discretizations} \label{sec:disc}
In this section we describe three known unfitted finite element methods. These methods are the original Nitsche based CutFEM presented in \cite{Hansbo02} (section~\ref{HansboMeth}), a variant without Nitsche stabilization parameter from \cite{lehrenfeld2016removing}  (section~\ref{SectLehrenfeld}), and a method from \cite{burman2011numerical} which has better robustness properties w.r.t. the jump in the diffusion coefficient $\mu$ across the interface. In the first two methods we do \emph{not} use any stabilization (e.g., ghost penalty)  for improving the in general very poor conditioning of the system matrix. In the  third method a ghost penalty stabilization is applied.   All three  methods use the same unfitted finite element space $V_\ell^\Gamma$, which is  introduced below. We discuss certain elementary properties of the corresponding finite element isomorphism, which maps an element of $V_\ell^\Gamma$ to its representation in a particular basis of $V_\ell^\Gamma$. These properties are relevant for the derivation of the prolongation and restriction operators in the multigrid method in section~\ref{sectMG}.   

Let $\{\mathcal{T}_\ell\}$, $\ell=0,1,\ldots$  be a family of nested, quasi-uniform simplicial triangulations of $\Omega$ which are not fitted to the interface $\Gamma$ ($\ell=0$ corresponds to the coarsest level). The mesh size parameter corresponding to $\mathcal{T}_\ell$ is denoted by $h_\ell$. We introduce the standard space of continuous 
piecewise linear finite elements with zero boundary conditions 
\begin{equation} \label{DefVl}
 V_\ell:= \{\, v_h \in C(\Omega)~|~{v_h}_{|\partial \Omega}=0,~~~{v_h}_{|T} \in \mathcal{P}_1\quad\text{for all}~~T \in \T_\ell\,\}.
\end{equation}
The unfitted variant of the space $V_\ell$ is obtained by ``cutting finite element functions along $\Gamma$''. In practice an approximation $\Gamma_\ell \approx \Gamma$ is used, which has a level dependent accuracy. In a multigrid solver one then has to deal with  level dependent interface approximations $\Gamma_\ell$, where the approximation on a coarse level  may be significantly different from the one on a fine level. This has to be taken into account in the prolongation and restriction operators used in the multigrid method. Actually, this is a key point in the particular multigrid method that we introduce in section~\ref{sectMG}.  We assume  approximations $\Gamma_\ell \approx \Gamma$, $\ell=0,1,2, \ldots,$ where $\Gamma_\ell$ is a connected interface approximation and define the set of cut elements $\Tellg := \{ T \in \Tell \mid \operatorname{meas}_{d-1}( T \cap \Gamma_\ell) > 0 \}$.  We define a corresponding partitioning of the domain $\Omega= \Omega_{\ell,1} \cup \Gamma_\ell \cup \Omega_{\ell,2}$, with cut subdomains $\Omega_{\ell,i}$ and  $\Gamma_\ell =\overline \Omega_{\ell,1} \cap \overline \Omega_{\ell,2}$; cf. Figure~\ref{fig:ext_domains}.
Using the  restriction operators to the subdomains $\Omega_{\ell,i}$ 
\begin{equation} \label{eq:restop}
 \xferest_{\ell,i} v = \begin{cases}
          v_{|\Omega_{\ell,i}} &\text{on } \Omega_{\ell,i} \\
          0            &\text{on } \Omega \setminus \Omega_{\ell,i}
         \end{cases}, ~~~i=1,2,
\end{equation}
we define the \emph{unfitted finite element space}
\begin{equation} \label{eq:vgammafull}
\Vellg = \xferest_{\ell,1} V_\ell \oplus \xferest_{\ell,2} V_\ell.
\end{equation}
Note that in general, due to $\Gamma_{\ell-1} \neq \Gamma_\ell$, these spaces are \emph{not nested}.
For a description of the (nodal) basis in this space and the corresponding finite element isomorphism (mapping functions to the vector of coefficients in the basis) it is convenient to use an isomorphism between this unfitted finite element space $\Vellg $ and another finite element space; cf. \eqref{iso} below. For this we first need some further notation.  
We introduce the \emph{extended}  domains  $\Omegaie \supset \Omega_{\ell,i}$
\begin{align*}
\Omegaie &:= \cup \, \{\, T \in \mathcal{T}_\ell ~\mid ~ T \cap \Omega_{\ell,i} \neq \emptyset \, \}, \quad i=1,2, 
\end{align*}
(cf. Figure~\ref{fig:ext_domains} for illustration).

\begin{figure}[ht!]
\centering
\begin{subfigure}[c]{0.45\textwidth}
\centering
\begin{overpic}[width=0.8\textwidth]{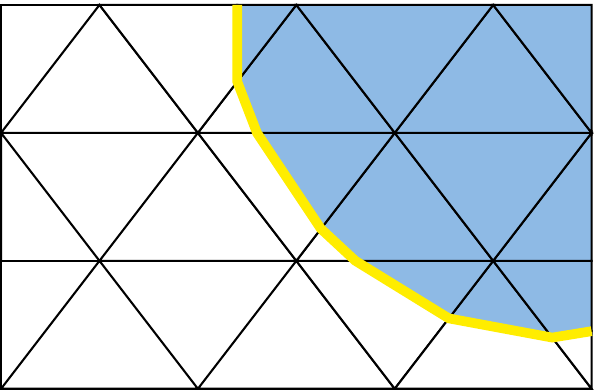}
  \put(105,10){\color{rwthPurple}$\Gamma_\ell$}
  \put(78,50){$\Omega_{\ell,1}$}
\end{overpic}
\subcaption{Cut subdomain $\Omega_{\ell,1}$.}
\end{subfigure}
\begin{subfigure}[c]{0.45\textwidth}
\centering
\begin{overpic}[width=0.8\textwidth]{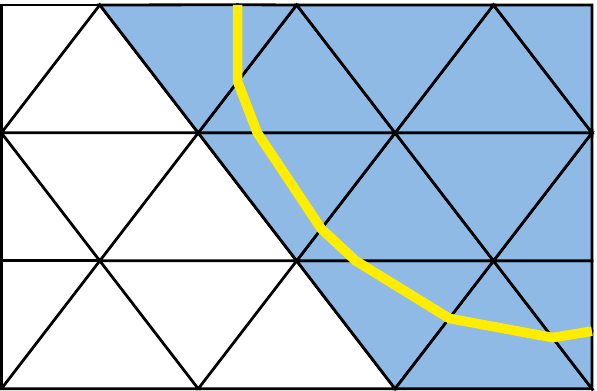}
  \put(105,10){\color{rwthPurple}$\Gamma_\ell$}
  \put(78,50){$\Omega_{\ell,1}^e$}
\end{overpic}
\subcaption{Extended subdomain $\Omega_{\ell,1}^e$ on $\mathcal{T}_\ell$.}
\end{subfigure}
\caption{Cut and extended subdomains.}
\label{fig:ext_domains}
\end{figure}

We formulate assumptions on the interface approximations $\Gamma_\ell$ and the corresponding extended subdomains $\Omegaie$.
\begin{assumption}\label{ass} For  $\mathcal{T}_\ell$, $\Gamma_\ell$, $\ell=0,1, \ldots$ the following holds:
\begin{enumerate}
\item[$~\refstepcounter{assum}(\theassum)\label{ass1}$] ${\rm dist}(\Gamma, \Gamma_\ell)  \leq c h_\ell^2$, 
\item[$~\refstepcounter{assum}(\theassum)\label{ass2}$] $ \Omegaie  \subseteq \Omega_{\ell-1,i}^e, \quad i=1,2$,  
\item[$~\refstepcounter{assum}(\theassum)\label{ass3}$] For all $T\in \mathcal{T}_\ell$ with $T \cap \Gamma_\ell \neq \emptyset$ (hence, $T \cap \overline{\Omega}_{\ell,i} \neq \emptyset$ for $i=1$ and $2$) this intersection 
 does not coincide with a subsimplex of $T$ 
(a face, edge or vertex of $T$). 
\end{enumerate}
 \end{assumption}
 
\begin{remark} \label{rem1} \rm 
 One particular choice for the approximate interface $\Gamma_\ell$ that is used in most applications of CutFEM is based on piecewise  linear interpolation. If $\Gamma$ is characterized as the zero level of some level set function $\phi$, and $I_\ell \phi$ is the piecewise linear nodal interpolation in the finite element space $V_\ell$, one defines $\Gamma_\ell:=\{\, x \in \Omega~|~(I_\ell\phi)(x)=0\,\}$.
\end{remark}

\begin{remark}
\rm  We briefly discuss Assumption~\ref{ass}. The assumption \eqref{ass3}, which is satisfied in generic cases, is made to simplify the presentation and avoid technical details.  The geometric accuracy assumption \eqref{ass1} is made to guarantee optimal second order discretization error bounds. The   assumption \eqref{ass2} is important for the construction of the multigrid prolongation operator. The assumption formalizes that the coarse level simplices in $\T_{\ell-1}$ that contain $\Gamma_\ell$ are the same as those containing $\Gamma_{\ell-1}$. Or in other words, if $\T_{\ell-1}^{\Gamma}$ denotes the subdomain formed by all simplices (on level $\ell-1$) which are intersected by the interface approximation $\Gamma_{\ell-1}$, then the interface approximation $\Gamma_\ell$ on the next level remains in this subdomain. The assumptions \eqref{ass1} and \eqref{ass2} are satisfied for the example discussed in Remark~\ref{rem1}.   
\end{remark}

 Corresponding to the extended subdomains we define the finite element spaces
\begin{equation} \label{deflocVh} \Velli{i}:= \{\, (v_h)_{|\Omegaie}~|~ v_h \in V_{\ell} \, \}, ~~i=1,2.
\end{equation}
These are standard linear finite element spaces (on the triangulated domain $\Omegaie$). The standard nodal basis in $\Velli{i}$ is denoted by $(\phielli)_{1 \leq k \leq n_{\ell,i}}$ and the corresponding finite element isomorphism is given by
\[
  P_{\ell,i} : \,\R^{n_{\ell,i}} \to \Velli{i}, \quad P_{\ell,i}\bu_i = \sum_{k=1}^{n_{\ell,i}} u_i^k \phielli.
\]
One easily checks that
\begin{equation} \label{iso}
 \cR_\ell:\, \Velli{1}\times \Velli{2} \to \Vellg, \quad \cR_\ell \begin{pmatrix} u_{h,1} \\ u_{h,2}\end{pmatrix} := \cR_{\ell,1} u_{h,1} +\cR_{\ell,2} u_{h,2} 
\end{equation}
defines \emph{an isomorphism}. As a first direct consequence of this we have that
\begin{equation} \label{basis}
  \{\, \cR_{\ell,i} \phielli~|~ 1\leq k \leq n_{\ell,i},~i=1,2 \, \} 
\end{equation}
is a basis of $\Vellg$. This basis is convenient for the derivation of a multigrid prolongation operator. Another useful basis of $\Vellg$ is introduced in section~\ref{sec:smooth}.

The natural finite element  isomorphism $\Pellg : \R^{n_{\ell,1} + n_{\ell,2}} \to \Vellg$ is given by 
\begin{subequations}\label{eq:feisomorph}
\begin{align}
& \Pellg \begin{pmatrix} \bu_1 \\ \bu_2 \end{pmatrix}: =
 \cR_\ell \begin{pmatrix} 
  P_{\ell,1} \bu_1 \\
P_{\ell,2} \bu_2 \end{pmatrix}, \quad \text{hence}
  \label{eq:feisomorpha}\\
 & (\Pellg)^{-1}\cR_\ell \begin{pmatrix} u_{h,1} \\ u_{h,2}\end{pmatrix} =\begin{pmatrix} 
  P_{\ell,1}^{-1} u_{h,1} \\
P_{\ell,2}^{-1} u_{h,2}
 \end{pmatrix} \label{eq:feisomorphb}
\end{align}
\end{subequations}
holds. 
These relations will be useful for the construction of an appropriate multigrid prolongation operator. One easily checks that for $\Pellg $ defined in \eqref{eq:feisomorpha} and $u_h \in \Vellg$, the vector $(\Pellg)^{-1} u_h$ contains the coefficients of the representation of $u_h$ in the nodal basis \eqref{basis}.

\subsection{Nitsche based CutFEM} \label{HansboMeth}
We briefly recall the method introduced  in \cite{Hansbo02}. Below $u_h \in \Vellg$, $(u_{h,1},u_{h,2}) \in \Velli{1}\times \Velli{2}$ are always related by the isomorphism \eqref{iso}. We use the averaging operator (on level $\ell$)
\begin{align*}
 \average{u_h} := ( \kappa_1 u_{h,1} + \kappa_2 u_{h,2} )|_{\Gamma_\ell}, \quad u_h \in \Vellg,
\end{align*}
with $
\kappa_i = \frac{|T_i|}{|T|}
$, where
 $T_i := T \cap \Omega_{\ell,i}$ for $T \in \T_\ell$. 
The jump operator (on level $\ell$) is defined by 
\begin{align*}
\jump{u_h} := (u_{h,1} - u_{h,2})|_{\Gamma_\ell}, \quad u_h \in \Vellg.
\end{align*}
The Nitsche CutFEM for discretization of \eqref{eq:ellipIf} is as follows: determine $u_h \in  \Vellg $ such that
\begin{equation}\label{eq:nitscheBfa}
a_h( u_h, v_h ) = l(v_h):= (f,v_h)_\Omega \quad \text{for all}~~ v_h \in \Vellg, \tag{Nitsche}
\end{equation}
with 
\begin{align} 
a_h( u, v) & = a(u,v) + N^c(u,v) + N^c(v,u) + N^s_{\lambda_N}(u,v),\\
a(u,v) &:= (\mu \nabla u, \nabla v)_{\Omega_{\ell,1} \cup \Omega_{\ell,2}}, \label{N1}\\
N^c(u,v) &:= ( \average{ - \mu \nabla u \cdot \vv{n}_\ell }, \jump{v} )_{\Gamma_\ell}, \\
N^s_{\lambda_N}(u,v) &:= \frac{\lambda_N}{h_\ell} ( \jump{u}, \jump{v} )_{\Gamma_\ell}. \label{N4}
\end{align}
Note that the bilinear forms $a(\cdot,\cdot)$, $N^c(\cdot,\cdot)$, $N^s_{\lambda_N}(\cdot,\cdot)$ depend on the level $\ell$ through the level dependent approximation $\Gamma_\ell \approx \Gamma$. The vector $ \vv{n}_\ell $ is the normal vector on $\Gamma_\ell$ (defined a.e.).
Symmetry of the bilinear form  in  \eqref{eq:nitscheBfa} is clear and  consistency, continuity and coercivity are proved in \cite{Hansbo02}. The coercivity condition depends on the choice of the penalty parameter $\lambda_N$ which has to be taken \enquote{sufficiently large} (but independent of $\ell$).
Together with an optimal approximation property of the space $\Vellg$ one obtains optimal order of convergence, i.e.
\begin{align} \label{eq:nitscheConvergence}
 \| u^* - u_h \|_{0,\Omega} \leq c h_\ell^2 \|u^*\|_{2,\BrokenOmega},
\end{align}
where $u^* \in H^2(\BrokenOmega)$  is the solution of problem \eqref{eq:ellipIf}. The constant $c$ in this error bound is independent of $\ell$ and the location of $\Gamma_\ell$ (but depends on $\mu$).
\subsection{Parameter-free CutFEM}\label{SectLehrenfeld}
As mentioned in the previous section, the choice of a sufficiently large  penalty parameter $\lambda_N$ is crucial for the coercivity of the bilinear form $a_h(\cdot,\cdot)$ used in the discretization \eqref{eq:nitscheBfa}. A drawback of the Nitsche method \eqref{eq:nitscheBfa} is that it requires this manually chosen stabilization parameter. Furthermore, if $\lambda_N$ is chosen globally this may lead  to overstabilization in some parts of the domain.
In   \cite{lehrenfeld2016removing}  a variant of the Nitsche method is presented that is parameter free.  We briefly describe this method and refer to  \cite{lehrenfeld2016removing} for more details and analysis of the method. 

We use the bilinear forms defined in \eqref{N1}-\eqref{N4}. The element bilinear forms, e.g. $a_{T}(\cdot,\cdot)$ corresponding to the element contribution to $a(\cdot,\cdot)$,  are defined in the usual way: $a(\cdot,\cdot) = \sum_{T \in \mathcal{T}_\ell} a_{T} (\cdot, \cdot)$.
On cut elements $T \in \Tellg$, $T_i = T \cap \Omega_{\ell,i}$,  a local lifting operator $\mathcal{L}_T : H^1(T_1 \cup T_2) \to \mathcal{P}^1_0(T_{1,2})$ is introduced, which maps into the space of polynomials of degree $1$ that are orthogonal to constants on each sub-element:
\begin{align*}
 \mathcal{P}_0^1(T_{1,2}) := \{ \,v \in L^2(T)~|~v|_{T_i} \in \mathcal{P}^1(T_i) / \mathcal{P}^0(T_i) \text{ for } i=1,2 \, \}. 
\end{align*}
The lifting operator is constructed such that it has the property
\begin{align}\label{eq:nitsche_paramfree_localprob}
 a_{T}( \mathcal{L}_T(u), v ) = N_T^c(v,u) \quad \text{for all}~~ v \in \mathcal{P}^1_0(T_{1,2}).
\end{align}
On uncut elements we set $\mathcal{L}_T(\cdot) = 0$ and thus obtain the global lifting operator
\begin{align*}
\mathcal{L} : H^1(\BrokenOmega) \to \bigoplus_{T \in \Tellg} \mathcal{P}_0^1(T_{1,2}) \quad \text{with} \quad \mathcal{L}(u)|_T := \mathcal{L}_T(u).
\end{align*}
The parameter free bilinear CutFEM, which we call P-Nitsche, is as follows: determine $u_h \in  \Vellg $ such that 
\begin{equation}\label{eq:nitsche_paramfree_bform} \tilde a_h( u_h, v_h ) = l(v_h):= (f,v_h)_\Omega \quad \text{for all}~~ v_h \in \Vellg, \tag{P-Nitsche}
\end{equation}
with 
\begin{align}\label{eq:nitsche_paramfree_bformA}
\tilde a_h(u,v) = a(u,v) + N^c(u,v) + N^c(v,u) + 2a( \mathcal{L}(u), \mathcal{L}(v) ) + N^s_1 (u,v).
\end{align} 
Through the lifting defined in \eqref{eq:nitsche_paramfree_localprob} the choice $\lambda_N=1$, as indicated by $N_1^s(\cdot,\cdot)$ in \eqref{eq:nitsche_paramfree_bformA}, suffices to guarantee coercivity. Sufficient additional  stability is implicitly added by the term $a(\mathcal{L}(\cdot), \mathcal{L}(\cdot))$. A proof of consistency, coercivity and continuity can be found in \cite{lehrenfeld2016removing}. These properties imply the same error bound as in  \eqref{eq:nitscheConvergence}. 
\newline

\begin{remark} \rm
In assembling the stiffness matrix corresponding to the bilinear form \eqref{eq:nitsche_paramfree_bformA}, for the evaluation of $\mathcal{L}_T$ one has  to solve the local systems in \eqref{eq:nitsche_paramfree_localprob}, which requires the inversion of element matrices. Further implementational aspects  are discussed  in \cite{lehrenfeld2015space,lehrenfeld2016removing}.
\end{remark}

\subsection{Coefficient-stable CutFEM}
The methods introduced in section \ref{HansboMeth} and \ref{SectLehrenfeld} are not robust with respect to large jumps in the diffusion coefficients, i.e. for $\mu_{\max} / \mu_{\min} \gg 1$. Hence, we also consider the method from \cite{burman2011numerical} which has better robustness properties w.r.t. the variation of the coefficient jump  $\mu_{\max} / \mu_{\min}$.  In that method the averaging operator is defined as
\begin{align*}
 \average{u_h}_\mu := ( \kappa_1^\mu u_{h,1} + \kappa_2^\mu u_{h,2} )|_{\Gamma_\ell}, \quad u_h \in \Vellg,
\end{align*}
with harmonic weights
\begin{align*}
\kappa_1^{\mu} = \frac{\mu_2}{\mu_1 + \mu_2}, \quad \kappa_2^\mu = \frac{\mu_1}{\mu_1 + \mu_2}, \quad \text{(hence, $\kappa_1^\mu + \kappa_2^\mu = 1$)}.
\end{align*}
We emphasize the harmonic averaging by the subscript $\mu$ in the averaging operator.  It turns out that using these weights  the method is not robust with respect to the interface location, i.e., there occur instabilities due to ``small cuts''.  To recover the stability, additional terms are included in the bilinear form, which are based on  the so-called ghost penalty stabilization \cite{Burman}:
\begin{align} \label{eq:ghostpen}
g_h(u_h,v_h) := \sum_{i=1}^2 \sum_{F \in \mathcal{F}^\ell_i} \varepsilon_g \, \mu_i \, h_F  
( \jump{\nabla u_{h,i} \cdot \vv{n}_F}_F, \jump{\nabla v_{h,i} \cdot \vv{n}_F}_F )_F
\end{align}
with a (constant) stabilization parameter $\varepsilon_g >0$.
Here, $\jump{\nabla v_{h,i} \cdot \vv{n}_F}_F$ denotes the jump of the normal component of the piecewise constant function $\nabla v_{h,i}$ across the face $F$. The sets of faces only contain faces close to the interface and are defined as
\begin{align*}
\mathcal{F}^\ell_i := \{ F \subset \partial T  \mid T \in \mathcal{T}_\ell^\Gamma, F \not\subset \partial \Omegaie \} \quad \text{for } i=1,2.
\end{align*}
An illustration for different triangulation levels is shown in Figure~\ref{fig:gpgrids}.
\vspace{0.5cm}
\begin{figure}[ht!]
\centering
\begin{subfigure}[c]{0.45\textwidth}
\centering
\begin{overpic}[width=0.8\textwidth]{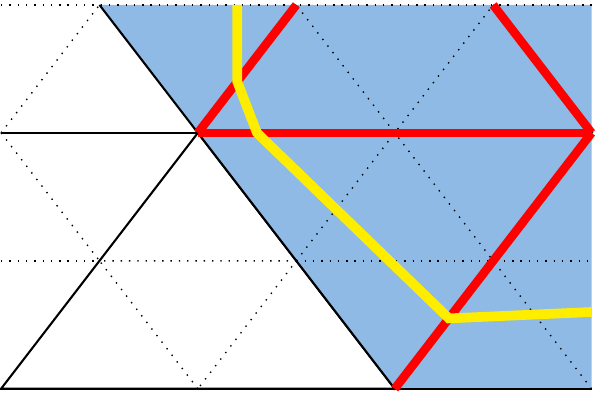}
  \put(105,10){\color{black}$\Gamma_\lmone$}
  \put(70,50){$\Omega_{\lmone,1}^e$}
    \put(50,70){\color{red}$\mathcal{F}_1^\lmone$}
\end{overpic}
\subcaption{Face set $\mathcal{F}_1^\lmone$ (in red) on $\mathcal{T}_{\ell-1}$}
\end{subfigure}
\begin{subfigure}[c]{0.45\textwidth}
\centering
\begin{overpic}[width=0.8\textwidth]{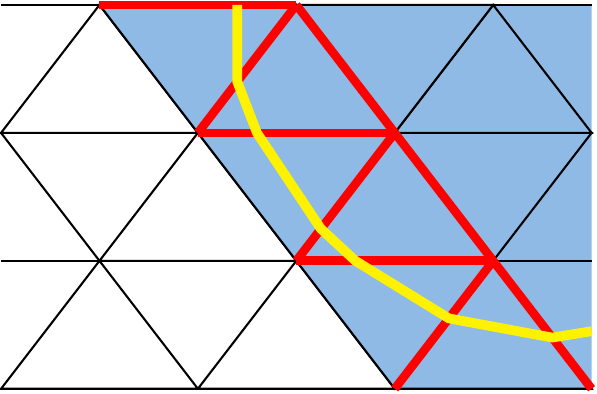}
  \put(105,10){\color{black}$\Gamma_\ell$}
  \put(80,50){$\Omega_{\ell,1}^e$}
  \put(50,70){\color{red}$\mathcal{F}_1^\ell$}
\end{overpic}
\subcaption{Face set $\mathcal{F}_1^\ell$ (in red) on $\mathcal{T}_{\ell}$}
\end{subfigure}
\caption{Face sets for ghost penalty stabilization on different refinement levels.}
\label{fig:gpgrids}
\end{figure}

The resulting discretization method, which we call $\mu$-Nitsche, is as follows: determine $u_h \in \Vellg$ such that
\begin{align} \label{eq:munitsche}
\hat{a}_h(u_h, v_h) = l(v_h) := (f,v_h)_\Omega  \quad \text{for all}~~ v_h \in \Vellg, \tag{$\mu$-Nitsche}
\end{align}
with 
\begin{align*}
\hat{a}_h(u,v) &:= a(u,v) + N^{c,\mu}(u,v) + N^{c,\mu}(v,u) + N^s_{\lambda^\mu}(u,v) + g_h(u,v), \\
N^{c,\mu}(u,v) &:= ( \average{ - \mu \nabla u \cdot \vv{n}_\ell }_\mu, \jump{v} )_{\Gamma_\ell}, \\
 \lambda^\mu  & :=  \frac{2 \mu_1 \mu_2}{\mu_1 + \mu_2} \lambda_N.
\end{align*}
The penalty parameter $\lambda^\mu$ is $\mu$-dependent (with $\lambda_N > 0$  an $\ell$- and $\mu$-independent parameter value).
In \cite{burman2011numerical} the estimate $\vertiii{u^* - u_h}_{1,h,\Omega} \leq c h \|u^*\|_{H^2(\BrokenOmega)}$ was proved with norm definition
\begin{align*}
\vertiii{v_h}^2_{1,h,\Omega} := \sum_{i=1}^2 \| \mu_i^\frac12 \nabla v_{h,i} \|_{0,\Omega}^2
+ \| \average{\mu}_\mu^\frac12 \average{ \nabla v_h \cdot \vv{n}_\Gamma} \|_{-\frac12,h,\Gamma}^2 
+ \| \average{\mu}_\mu^\frac12 \jump{v_h} \|^2_{\frac12,h,\Gamma},
\end{align*}
where $\| v_h \|_{\pm\frac12,h,\Gamma}^2 := \sum_{T \in \mathcal{T}_\ell^\Gamma}   h_T^{\mp 1} \| v_h \|_{0,\Gamma_T}^2$.
This error bound can be  combined with a duality argument, resulting in  an optimal error estimate as in  \eqref{eq:nitscheConvergence}. 

\section{Multigrid method} \label{sectMG}
In this section we present an efficient multigrid solver for the discrete problems resulting from \eqref{eq:nitscheBfa}, \eqref{eq:nitsche_paramfree_bform} or \eqref{eq:munitsche}. 
These discretizations use the same unfitted finite element space $\Vellg $.

For a multigrid solver we have to specify how restrictions and prolongations are constructed and which smoother is used. We first address the restrictions. 
Given an appropriate prolongation (explained below) $\bp_\ell$ (in matrix representation), we make the canonical choice for the restriction, namely $\br_\ell:=\bp_\ell^T$. The coarse grid matrices are constructed by the usual procedure in geometric multigrid methods, namely \emph{direct discretization on the coarser levels}. Alternatively one could consider the construction of coarse grid matrices by the Galerkin approach ($A_\lmone = \vv{r}_\ell A_\ell \vv{p}_\ell$). It turns out that both methods yield similar results; cf. Remark~\ref{RemGalerkinapproach}.  In the following two subsections we discuss the prolongation operator and the smoother. 
\subsection{Prolongation operator} \label{sec:prol}
For the derivation and representation of the prolongation operator we use the basis \eqref{basis}.
Recall that the unfitted finite element spaces are not necessarily nested. We introduce a natural prolongation $p_\ell: \, \Vellmg \to \Vellg$. Note that due to \eqref{ass2} the spaces $V_{\ell,i}$ are nested in the sense that  for $w \in  V_{\ell-1,i} $ we have ${w}_{|\Omega_{\ell,i}^e}\in V_{\ell,i}$, $i=1,2$. We introduce the operator
\begin{align}\label{eq:levelcutoff}
 \Iellg : \Vellmi{1} \times \Vellmi{2} \to \Velli{1} \times \Velli{2}, \quad \Iellg \begin{pmatrix} u_{H,1} \\ u_{H,2} \end{pmatrix} := \begin{pmatrix} \Iellgi{1} u_{H,1} \\ \Iellgi{2} u_{H,2}  \end{pmatrix}
\end{align}
with 
\begin{align}
\Iellgi{i}: \Vellmi{i} \to \Velli{i}, \quad \Iellgi{i} u_{H,i}  := u_{H,i}|_{\Omegaie},
\end{align}
the natural embedding (by restriction) of a coarse level function $u_{H,i}$ (defined on $\Omega_{\ell-1,i}^e$) into $\Velli{i}$. 
 Hence for $u_H = \cR_{\ell-1} (u_{H,1}, u_{H,2})^T \in \Vellmg $ 
the prolongation
\begin{equation} \label{defprol}
 p_\ell u_H := \cR_{\ell} \Iellg \cR_{\ell-1}^{-1} u_H = \cR_{\ell} \big( {u_{H,1}}_{|\Omega^e_{\ell,1}}, {u_{H,2}}_{|\Omega^e_{\ell,2}} \big)^T \in \Vellg   
\end{equation}
is well-defined. The prolongation procedure is illustrated in Figure~ \ref{fig:twodominterpolation}.
\begin{figure}[ht!]
\centering
\begin{subfigure}[c]{0.45\textwidth}
\centering
\colorlet{poscolor}{rwthBlue}
  \colorlet{negcolor}{rwthGreen}
  \colorlet{pncolor}{IGPMred}
  \noindent 
   \begin{tikzpicture}
    \begin{axis}
    [
     width = \textwidth,
     height= 0.5\textwidth,
     xmin = -0.1,
     xmax = 5,
     clip = false,
     axis lines = middle,
     x axis line style=-,
     hide y axis,
     xticklabels = {},
     line width = 1pt,
     major tick length = 5pt,
     every tick/.style={thick},
     mark size = 1.5pt,
     xtick = {0,1,2,3,4,5,6}
    ]
    \addplot[line width = 2pt, dashed, color = black] coordinates{ (2.8,-0.1) (2.8,2.4) };
        
    
     \addplot[line width = 1.8pt, mark = *, color = poscolor] coordinates { (0,0) (1,1) (2,1.5) (3,1.8) };
     \addplot[line width = 1.8pt, mark = *, color = negcolor] coordinates { (2,2.3) (3,2) (4,1.2) (5,0) };
     \node at (axis cs:1.3,1.9) {\color{poscolor}$u_{H,1}$};
     \node at (axis cs:3.8,2.6) {\color{negcolor}$u_{H,2}$};

     
     \node at (axis cs:3.3,1.1) {\color{black}$\Gamma_\lmone$};
     
    \end{axis}
   \end{tikzpicture} 
\subcaption{$\cR_\lmone^\mone u_H$ on $\mathcal{T}_{\ell-1}$}
\end{subfigure}
\begin{subfigure}[c]{0.45\textwidth}
\centering
\colorlet{poscolor}{rwthBlue}
  \colorlet{negcolor}{rwthGreen}
  \colorlet{pncolor}{IGPMred}
  \noindent 
   \begin{tikzpicture}
    \begin{axis}
    [
     width = \textwidth,
     height= 0.5\textwidth,
     xmin = -0.1,
     xmax = 5,
     clip = false,
     axis lines = middle,
     x axis line style=-,
     hide y axis,
     xticklabels = {},
     line width = 1pt,
     major tick length = 5pt,
     every tick/.style={thick},
     mark size = 1.5pt,
     xtick = {0,1,2,3,4,5,6}
    ]
    \addplot[line width = 2pt, dashed, color = black] coordinates{ (2.3,-0.1) (2.3,2.4) };
        
    
    
      \foreach \x in {0.5,1.5,...,4.5}
      { 
        \addplot[pncolor] coordinates { (\x,-0.1) (\x,0.1)};
      }
    
    
     \addplot[line width = 1.8pt, mark = *, color = poscolor] coordinates { (0,0) (1,1) (2,1.5) };
  	 \addplot[line width = 1.8pt, color = poscolor] coordinates { (2,1.5) (2.5,1.65) };
	 \addplot[line width = 1.5pt, dotted, color = poscolor] coordinates { (2.5,1.65) (3,1.8) };
     \addplot[only marks, mark = *, color = pncolor] coordinates { (0.5,0.5) (1.5,1.25) (2.5,1.65) };
     \addplot[line width = 1.8pt, mark = *, color = negcolor] coordinates { (2,2.3) (3,2) (4,1.2) (5,0) };
     \addplot[only marks, mark = *, color = pncolor] coordinates { (2.5,2.15) (3.5,1.6) (4.5,0.6) };
     \node at (axis cs:1.2,1.9) {\color{poscolor}$u_{h,1}$};
     \node at (axis cs:3.7,2.6) {\color{negcolor}$u_{h,2}$};

     \node at (axis cs:2.7,1.1) {\color{black}$\Gamma_\ell$};
     
    \end{axis}
   \end{tikzpicture} 
\subcaption{$\cR_\ell^\mone p_\ell u_H$ on $\mathcal{T}_{\ell}$}
\end{subfigure}
\caption{Illustration of prolongation procedure.}
\label{fig:twodominterpolation}
\end{figure}
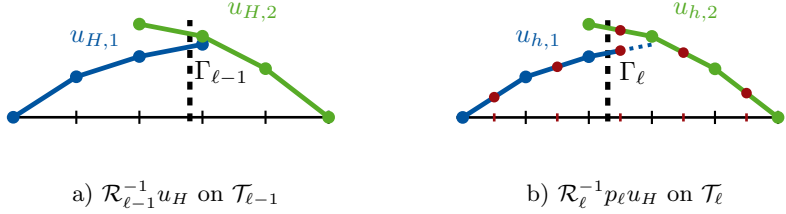

Using the results in \eqref{eq:feisomorph} we derive the  matrix representation of this prolongation operator. For this we use the matrix representation of the prolongation corresponding to $V_{\ell,i}$, i.e. 
\begin{align*}
\bp_{\ell,i}:= P_{\ell,i}^{-1} \Iellgi{i} P_{\ell-1,i}.
\end{align*}
 Note that for this operator to be well-defined we need \eqref{ass2}, because for $\bu \in \R^{n_{\ell-1,i}}$ we have that $w:=P_{\ell-1,i}\bu$ is defined (only) on  $\Omega_{\ell-1,i}^e$; due to \eqref{ass2} we have that  $P_{\ell,i}^{-1} \Iellg P_{\ell-1,i}\bu =P_{\ell,i}^{-1} w_{|\Omega_{\ell,i}^e}$ is well-defined. 
\begin{lemma} \label{lemprol}
For the prolongation in \eqref{defprol} the following holds:
 \begin{equation}
     p_{\ell} P_{\ell-1}^\Gamma   \begin{pmatrix}  \bu_{H,1} \\ \bu_{H,2} \end{pmatrix} = P_\ell^\Gamma \begin{pmatrix} \bp_{\ell,1}\bu_{H,1} \\  \bp_{\ell,2}\bu_{H,2}         \end{pmatrix}      \quad \text{for}~~\begin{pmatrix}  \bu_{H,1} \\ \bu_{H,2} \end{pmatrix} \in \R^{n_{\ell-1,1}+n_{\ell-1,2}}.
 \end{equation}
\end{lemma}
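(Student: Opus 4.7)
The proof will be an unwinding of definitions, threading the composition $p_\ell \circ P_{\ell-1}^\Gamma$ through the isomorphism $\cR_\ell$ and the level-dependent embedding $\Iellg$, and then recognising the result as $P_\ell^\Gamma$ applied to the block-diagonal action of the $\bp_{\ell,i}$. The main tool is the factorisation \eqref{eq:feisomorpha}--\eqref{eq:feisomorphb}, which decouples the two subdomain pieces; assumption~\eqref{ass2} is what makes the embedding $\Iellgi{i}$ well-defined on the image of $P_{\ell-1,i}$ and is quietly used throughout.

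The plan is as follows. First I would apply \eqref{eq:feisomorphb} to rewrite
$\cR_{\ell-1}^{-1} P_{\ell-1}^\Gamma (\bu_{H,1},\bu_{H,2})^T = (P_{\ell-1,1}\bu_{H,1},\,P_{\ell-1,2}\bu_{H,2})^T.$
Substituting this into the definition \eqref{defprol} of $p_\ell$, the composition $p_\ell P_{\ell-1}^\Gamma$ applied to $(\bu_{H,1},\bu_{H,2})^T$ becomes
$\cR_\ell \,\Iellg\,(P_{\ell-1,1}\bu_{H,1},\,P_{\ell-1,2}\bu_{H,2})^T.$

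Next I would use the block-diagonal definition of $\Iellg$ in \eqref{eq:levelcutoff} to distribute the embedding over the two components, obtaining
$\cR_\ell\,(\Iellgi{1} P_{\ell-1,1}\bu_{H,1},\,\Iellgi{2} P_{\ell-1,2}\bu_{H,2})^T.$
By the very definition $\bp_{\ell,i}:=P_{\ell,i}^{-1}\Iellgi{i}P_{\ell-1,i}$, we have $\Iellgi{i} P_{\ell-1,i}=P_{\ell,i}\bp_{\ell,i}$, so each component becomes $P_{\ell,i}\bp_{\ell,i}\bu_{H,i}$. Finally, applying \eqref{eq:feisomorpha} in the reverse direction recognises $\cR_\ell(P_{\ell,1}\bp_{\ell,1}\bu_{H,1},\,P_{\ell,2}\bp_{\ell,2}\bu_{H,2})^T$ as $P_\ell^\Gamma(\bp_{\ell,1}\bu_{H,1},\,\bp_{\ell,2}\bu_{H,2})^T$, which is the claimed identity.

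There is no real obstacle; the only subtle point is to check that each intermediate object is well-defined. In particular, $\Iellgi{i}$ only acts on functions defined on $\Omega_{\ell-1,i}^e$, and the image of $P_{\ell-1,i}$ consists of such functions; by assumption \eqref{ass2} the subsequent restriction to $\Omega_{\ell,i}^e$ genuinely lies in $V_{\ell,i}$, so the factorisation of $\bp_{\ell,i}$ used above is legitimate. Once this is noted, the equality is a direct computation.
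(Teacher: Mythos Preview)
Your proof is correct and follows essentially the same route as the paper's own argument: both unwind the definition of $p_\ell$ via \eqref{defprol}, use \eqref{eq:feisomorpha}--\eqref{eq:feisomorphb} to decouple the two subdomain components, and then recognise $P_{\ell,i}^{-1}\Iellgi{i}P_{\ell-1,i}$ as $\bp_{\ell,i}$. The only cosmetic difference is the order in which \eqref{eq:feisomorpha} and \eqref{eq:feisomorphb} are invoked; your remark on the role of assumption~\eqref{ass2} in making $\Iellgi{i}$ well-defined is appropriate and matches the paper's comment preceding the lemma.
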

\begin{proof}
 Take $\begin{pmatrix}  \bu_{H,1} \\ \bu_{H,2} \end{pmatrix} \in \R^{n_{\ell-1,1}+n_{\ell-1,2}}$ 
 and
 \[
    u_H:= P_{\ell-1}^\Gamma   \begin{pmatrix}  \bu_{H,1} \\ \bu_{H,2} \end{pmatrix} = \cR_{\ell-1}
    \begin{pmatrix}
     P_{\ell-1,1}  \bu_{H,1} \\ P_{\ell-1,2} \bu_{H,2}
    \end{pmatrix},
 \]
where in the last equality we used \eqref{eq:feisomorpha}. Hence, from \eqref{defprol} and  \eqref{eq:feisomorphb} we get
\[ \begin{split}
  p_{\ell} P_{\ell-1}^\Gamma   \begin{pmatrix}  \bu_{H,1} \\ \bu_{H,2} \end{pmatrix} & = p_{\ell} u_H = \cR_\ell \Iellg  \begin{pmatrix}
     P_{\ell-1,1}  \bu_{H,1} \\ P_{\ell-1,2} \bu_{H,2}
    \end{pmatrix}  \\ & =  P_\ell^\Gamma \begin{pmatrix}
     P_{\ell,1}^{-1} \Iellgi{1} P_{\ell-1,1}  \bu_{H,1} \\ P_{\ell,2}^{-1} \Iellgi{2} P_{\ell-1,2} \bu_{H,2}
    \end{pmatrix}= P_\ell^\Gamma \begin{pmatrix} \bp_{\ell,1}\bu_{H,1} \\  \bp_{\ell,2}\bu_{H,2}         \end{pmatrix}, \end{split}
\]
which proves the result.
\end{proof}
\ \\
As a direct corollary we obtain that the matrix representation of the prolongation in \eqref{defprol}, i.e., $ \bp_\ell= (P_{\ell}^\Gamma)^{-1}p_\ell P_{\ell-1}^\Gamma$,  is given by
\begin{equation} \label{matprol}
 \bp_\ell \begin{pmatrix}  \bu_{H,1} \\ \bu_{H,2} \end{pmatrix} = \begin{pmatrix} \bp_{\ell,1}\bu_{H,1} \\  \bp_{\ell,2}\bu_{H,2}         \end{pmatrix}      \quad \text{for}~~\begin{pmatrix}  \bu_{H,1} \\ \bu_{H,2} \end{pmatrix} \in \R^{n_{\ell-1,1}+n_{\ell-1,2}}.
 \end{equation}
This is the prolongation used in the multigrid solver. 
It has the following interesting (and at first sight maybe surprising) property. The prolongation in matrix representation only requires  $ \bp_{\ell,i}\bu_{H,i}$, $i=1,2$, where $\bp_{\ell,i}$ is the prolongation corresponding to $V_{\ell,i}$. This $\bp_{\ell,i}$ does \emph{not} depend on $\Gamma_{\ell-1}$ or $\Gamma_\ell$ and is just the simple interpolation used in a standard setting of nested finite element spaces, without any cutting procedures involved.  For the matrix-vector representation of the discrete problems we use another basis as the one in \eqref{basis}, which  will be introduced in the next section. The matrix representation of the prolongation (in the basis \eqref{basis}) can easily be transformed to this other basis.

\subsection{Smoothers} \label{sec:smooth} 
Before we explain the smoother we first introduce the stiffness matrix corresponding to the discretization methods described in section~\ref{sec:disc}. In XFEM one usually chooses a basis of the space $\Vellg$ that differs from the one in \eqref{basis}. For the matrix-vector representation of the unfitted finite element discretization we also use this alternative basis, which we now introduce. Recall that $(\varphi_{\ell,i}^k)_{1 \leq k \leq n_{\ell,i}}$  denotes the standard nodal basis in $V_{\ell,i}$; cf.~\eqref{deflocVh}. The vertex corresponding to $\varphi_{\ell,i}^k$ is denoted by $x_{k,i}$.   Let $\cV_i$ be the set of those vertices in $\Omega_{\ell,i}$ that correspond to exactly two degrees of freedom in $\Vellg$ (namely in $V_{\ell,1}$ and $V_{\ell,2}$). We define:
\begin{equation} \label{defB}
 B_i:=\{\, \varphi_{\ell,i}^k~|~x_{k,i} \in {\Omega}_{\ell,i} \,\},~~B_i^\Gamma:= \{\,\varphi_{\ell,i}^k|_{\Omega \setminus \Omega_{\ell,i}} \mid x_{k,i} \in \cV_i\,\},~~i=1,2. 
\end{equation}
Note that $B_1 \cup B_2$ is the nodal basis in the standard finite element space $V_\ell$ \eqref{basis}. The disjoint union $B_1 \cup B_2 \cup B_1^\Gamma \cup B_2^\Gamma$ is a basis of $\Vellg$, which we call XFEM basis. The corresponding finite element isomorphism is denoted by $\tilde P_{\ell}^\Gamma:\, \Bbb{R}^{n_{\ell,1}+n_{\ell,2}} \to V_\ell^\Gamma$. It is easy to transform between this XFEM basis and the one given in \eqref{basis}. For the matrix-vector  representation of the discretizations we use the XFEM basis. 
Hence, the stiffness matrix $\bA_\ell \in \R^{(n_{\ell,1}+n_{\ell,2})\times(n_{\ell,1}+n_{\ell,2}) }$ for \eqref{eq:nitscheBfa} is defined by (with $\langle \cdot, \cdot \rangle$ the Euclidean scalar product)
\begin{equation} \label{stiffnessmatrix}
 \langle \bA_\ell \bu, \bv \rangle = a_h(\tilde P_\ell^\Gamma \bu, \tilde P_\ell^\Gamma \bv), \quad \bu, \bv \in \R^{n_{\ell,1}+n_{\ell,2}},
\end{equation}
and similarly for  \eqref{eq:nitsche_paramfree_bform} and \eqref{eq:munitsche}.

\begin{remark} \rm
 Clearly there is an issue related to ``small cuts''. In \cite{Reusken07} for the case of the XFEM basis  it is shown that the poor conditioning of this basis with respect to the $L^2$-scalar product can be eliminated by a simple scaling. More precisely, let $\bM_\ell$ be the mass matrix formed by the $L^2(\Omega)$ scalar product of the XFEM basis functions and $\bD_\ell:={\rm diag}(\bM_\ell)$. Then $\bD^{-1}_\ell\bM_\ell$ has a \emph{uniformly bounded spectral condition number}, where the uniformity holds both with respect to the refinement level $\ell$ and the way in which the triangulation is intersected by the (approximate) interface $\Gamma_\ell$.     Such a robustness property w.r.t. the location of $\Gamma_\ell$ does in general \emph{not} hold for the diagonally scaled stiffness matrix. In particular 
in the discretizations  \eqref{eq:nitscheBfa} and \eqref{eq:nitsche_paramfree_bform} there are no (e.g. ghost penalty type) stabilizations that yield robustness of the conditioning of the stiffness matrix  $\bA_{\ell}$ w.r.t. the location of the interface. This effect is illustrated in Remark~\ref{Remcondnumber}. 
 Despite  this possibly very poor conditioning, the multigrid solver introduced below has  a contraction number that is small uniformly in the level number $\ell$, the 
interface location and (for the method \eqref{eq:munitsche}) the jump in  the diffusion coefficient. 
\end{remark}

Concerning the choice of the smoother we distinguish two problem classes. If the quotient of the largest and smallest  diffusion coefficient  is moderate, i.e.,  $\mu_{\max}/\mu_{\min} =\mathcal{O}(1)$, we use a \emph{standard Gauss-Seidel smoother}. From the experiments we see that this yields satisfactory results.
If, however, this quotient is increased we observe a deterioration of the rate of convergence. 

For the case of large coefficient jumps $\mu_{\max}/\mu_{\min} \gg 1$ we propose a modification in which 
an additional local error damping procedure  is included that acts \emph{only close to the interface}. One obvious possibility, used also in multigrid methods in other settings, is to apply local smoothing (cf.\cite{bramble1993multigrid,griebel1995abstract,janssen2011adaptive}), in our case that would correspond to adding extra Gauss-Seidel smoothing iterations at grid points close to the interface. From experiments we see that this then results in a fairly efficient multigrid solver, also for larger diffusion quotient ratios.  We propose, however, another local correction procedure which results in a solver with (even) better performance.
This smoother, which we call ``Gauss-Seidel with interface correction'' (GS-IC), is based on a simple domain decomposition approach and is explained below. 

\subsubsection*{Gauss-Seidel with interface correction} 
The smoother (on level $\ell$) needs only information from the discretization on level $\ell$. To simplify notation, in the remainder of this section we delete the level index $\ell$. The set of those vertices in $\Omega_i$ that correspond to exactly two degrees of freedom is denoted by $\cV_i=\{x_{1,i}, \ldots, x_{m_i,i}\}$. Let
$\mathcal{I}^\Gamma \subset \{ 1,\ldots,n_1+n_2\}$ be the subset which contains the (global) numbering of the unknowns corresponding to the vertices in $\cup_{i=1,2} \cV_i$ and $R^\Gamma: \, \Bbb{R}^{n_1+n_2} \to \Bbb{R}^{|\cIG|}$ the matrix representation of the corresponding injection. Note that $|\cIG|=2(m_1+m_2)$. 
With that we  define the \emph{local interface matrix}
\begin{equation} \label{Aloc}
A^\Gamma = R^\Gamma A \, (R^\Gamma)^T\in \Bbb{R}^{|\cIG|\times |\cIG|},
\end{equation}
which is the part of the full stiffness matrix $A$ corresponding to the degrees of freedom  ``close to'' the interface.
We propose a local interface correction smoother in the spirit of a multiplicative Schwarz smoother, where first a standard smoothing, e.g. Gauss-Seidel, is performed on the entire domain followed by a correction for the interface unknowns.

 \begin{algorithm}[H]
      \setstretch{1.5}
      \caption{Gauss-Seidel with interface correction (GS-IC)}
      \label{alg:ifAsSmooth}
      \begin{algorithmic}[1] 
      \Function{$S^C_\Gamma$}{$\vv{x}^n,\vv{b},A$}
      \State $\vv{x}^{n+\frac12} = S(\vv{x}^n,\vv{b},A)$ \Comment{perform standard smoothing}
      \State $\vv{r} = A \vv{x}^{n+\frac12}  - \vv{b}$ \Comment{new residual}
      \State  $\vv{x}^{n+1} = \vv{x}^{n+\frac12} - (R^\Gamma)^T (A^\Gamma)^\mone \, R^\Gamma \vv{r}$ \Comment{interface update}
       \State \Return $\vv{x}^{n+1}$ 
       \EndFunction     
       \end{algorithmic}            
     \end{algorithm}
     
Systems with the matrix $A^\Gamma$ can be solved (approximately) with arithmetic costs that are (very) low compared to the costs of the Gauss-Seidel iteration in step 2. This will be further addressed in 
section~\ref{sectlocalA}.

\subsection{Analysis of local interface matrix $A^\Gamma$} \label{sectlocalA}
In the case of moderate or large coefficient jumps we propose to use the GS-IC smoother. For moderate coefficient jumps the discretization methods \eqref{eq:nitscheBfa} and  \eqref{eq:nitsche_paramfree_bform} yield satisfactory results. Below, in section~\ref{sectcond} it is shown that for these methods the spectral condition number of the diagonally preconditioned matrix $A^\Gamma$ is bounded \emph{independent of the level $\ell$ and of the location of $\Gamma$ in the triangulation}. From numerical experiments (cf. section~\ref{sec:exp}) we see that a fixed (independent of $\ell$) low number of (preconditioned) conjugate gradient (CG) iterations is sufficient for the \emph{approximate} solution of the linear system in step 4 of GS-IC. Since the number of unknowns in the $A^\Gamma$-system is much lower than the total number of unknowns, $|\cIG| \ll n_1+n_2$, and $A^\Gamma$ has the same sparsity pattern as $A$, we thus obtain a very efficient realization of the GS-IC smoother. The condition number  of the diagonally preconditioned matrix $A^\Gamma$ depends on the size of the coefficient jump. For very large jumps the condition numbers 
are very large and one needs (too) many CG iterations. In such a case the discretization method \eqref{eq:munitsche} is significantly better than the other two. For the  linear system resulting from this method we propose to use the multigrid solver with GS-IC smoother and in step 4 we apply a  \emph{sparse direct solver}. This still is an efficient smoother as further explained in section~\ref{sectdirect}.

\subsubsection{Conditioning of matrix $A^\Gamma$} \label{sectcond}
For an analysis of conditioning properties of the matrix $A^\Gamma \in \Bbb{R}^{|\cIG|\times |\cIG|}$ it is convenient to introduce the corresponding representation on the finite element space $V^\Gamma$ (we delete the level index $\ell$). To each $x_{k,i} \in \cV_i$  there corresponds one standard nodal finite element function and one cut finite element basis function. These are denoted by  $\varphi_{i}^k$ and $\varphi_{i,\Gamma}^k:= (\varphi_{i}^k)_{|(\Omega \setminus \Omega_{i})}$, respectively ($1 \leq k \leq m_i$).   We introduce the (local) subspaces of $V^\Gamma$:
\begin{align}
 \cW_0 & := {\rm span}\{\, \varphi_{i}^k~|~k=1, \ldots, m_i,~~i=1,2\,\} \nonumber \\
 V_i^\text{ex} &:= {\rm span}\{\, \varphi_{i,\Gamma}^k~|~k=1, \ldots, m_i\,\}, ~i=1,2, ~~\cW_1:=V_1^\text{ex} \oplus V_2^\text{ex} \label{Basis} \\
 V_\text{loc}& :=\cW_0\oplus \cW_1 \subset V^\Gamma.\nonumber
\end{align}
For the finite element isomorphism w.r.t. the XFEM basis $\tilde P^\Gamma:\, \Bbb{R}^{n_1+n_2} \to V^\Gamma$ we have, per construction, that $ \hat P^\Gamma:\,\Bbb{R}^{|\cIG|} \to V_\text{loc}$ is an isomorphism and the local matrix $A^\Gamma$ in \eqref{Aloc} is the matrix representation of $a_h(\cdot,\cdot)$ restricted to $V_\text{loc}$:
\[
  \la A^\Gamma \bu,\bv\ra = a_h(\hat P^\Gamma \bu,\hat P^\Gamma \bv), \quad \text{for all}~~\bu, \bv \in \Bbb{R}^{|\cIG|}.
\]
For the analysis of the spectral condition number of $A^\Gamma$ we will use the key property (explained in more detail below) that the splitting $V_\text{loc}=\cW_0\oplus \cW_1$ is a \emph{stable splitting} w.r.t. $a_h(\cdot,\cdot)$. The restrictions of $  \hat P^\Gamma$ to  the subspace $\cW_s$ are denoted by $\hat P_s$, $s=0,1$:
\begin{align*}
  \hat P_0: & \, \Bbb{R}^{m_1+m_2} \to \cW_0, \quad \hat P_0 \hat \bu := \sum_{i=1}^2\sum_{k=1}^{m_i} \hat u_{i}^k \varphi^k_{i}, \\
  \hat P_1: & \, \Bbb{R}^{m_1+m_2} \to \cW_1, \quad \hat P_1 \tilde \bu := \sum_{i=1}^2\sum_{k=1}^{m_i} \tilde u_{i}^k \varphi_{i,\Gamma}^k.
\end{align*}
The unknowns in $\cIG$ are numbered accordingly, taking first the unknowns corresponding to $\cW_0$ and then those corresponding to $\cW_1$. This is denoted by the partitioning $ \Bbb{R}^{|\cIG|}\ni \bu =\begin{pmatrix} \hat \bu \\ \tilde \bu \end{pmatrix}$. We write $V_\text{loc} \ni u = \hat P^\Gamma \bu = \hat P_0 \hat \bu +\hat P_1 \tilde \bu =: \hat u + \tilde u \in \cW_0+\cW_1$ for the splitting of a finite element function. Also the matrix $D:= {\rm diag}(A^\Gamma)$ is partitioned accordingly, i.e., 
\[ D= \begin{pmatrix} D_0 & \emptyset \\ \emptyset & D_1 \end{pmatrix}, \quad  \text{and} ~~
  \la D\bu, \bu\ra = \la D_0 \hat \bu, \hat \bu \ra + \la D_1 \tilde \bu, \tilde \bu \ra.
\]
We introduce a representation of the Jacobi preconditioner $D_s$ in the subspace $\cW_s$, i.e., $B_s: \, \cW_s \to \cW_s$ such that
\[
  (B_s w,z)_{L^2}:= \la D_s \hat P_s^{-1} w, \hat P_s^{-1} z \ra \quad \text{for all}~~w,z \in \cW_s,~~s=0,1.
\]
For arbitrary $\bu \in \Bbb{R}^{|\cIG|}$, $\bu \neq 0$, we then have
\[
 \frac{\la A^\Gamma \bu,\bu \ra}{\la D \bu,\bu\ra} = \frac{ a_h(u,u)}{(B_0 \hat u,\hat u)_{L^2} +(B_1 \tilde u,\tilde u)_{L^2}}. 
\]
We now first consider the standard Nitsche bilinear form $a_h(\cdot,\cdot)$ used in \eqref{eq:nitscheBfa}. In \cite{lehrenfeld2017optimal}, for the two-dimensional case $d=2$ the following \emph{stable splitting property} is derived: there exists a  constant $K >0$, independent of the discretization $\ell$ and of how the triangulation is intersected by $\Gamma$ such that
\begin{equation} \label{stable}
 K \big( a_h(\hat u,\hat u)+a_h(\tilde u,\tilde u) \big) \leq a_h(u,u) \leq 2\big(a_h(\hat u,\hat u)+a_h(\tilde u,\tilde u)\big) \quad \forall~u=\hat u+ \tilde u \in V_\text{loc}.
\end{equation}
From numerical experiments (given in \cite{lehrenfeld2017optimal}) and further theoretical results it is expected that this result is also valid for $d=3$. We thus obtain
\[
  \frac{\la A^\Gamma \bu,\bu \ra}{\la D \bu,\bu\ra} \sim \frac{a_h(\hat u,\hat u)+a_h(\tilde u,\tilde u) }{(B_0 \hat u,\hat u)_{L^2} +(B_1 \tilde u,\tilde u)_{L^2}},
\]
where the $\sim$ is used to denote that there are inequalities between the terms on both sides with strictly positive constants independent of the discretization $\ell$ and of how the triangulation is intersected by $\Gamma$. It is easy to show (cf. Lemma 4.2 in \cite{lehrenfeld2017optimal}) that the Jacobi preconditioner and $a_h(\cdot,\cdot)$ are spectrally equivalent on $\cW_1$ (in fact even on the two subspaces $V_i^{ex}$ of $\cW_1$):
\begin{equation} \label{spec1}
  \frac{a_h(\tilde u,\tilde u) }{B_1 (\tilde u,\tilde u)_{L^2}} \sim 1 \quad \text{for all}~~\tilde u \in \cW_1.
\end{equation}
The proof of this result in \cite{lehrenfeld2017optimal} holds for both $d=2$ and $d=3$. It remains to derive a similar spectral equivalence on $\cW_0$. Such a result is given in the following lemma.
\begin{lemma} \label{lemspectral} The spectral equivalence 
 \begin{equation} \label{specAA}
  \frac{a_h(\hat u,\hat u) }{B_0 (\hat u,\hat u)_{L^2}} \sim 1 \quad \text{for all}~~\hat u \in \cW_0
\end{equation}
holds.
\end{lemma}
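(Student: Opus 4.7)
The plan is to exploit the fact that $\cW_0$ is spanned by \emph{standard} (uncut) $\mathcal{P}_1$ nodal basis functions. This makes the Nitsche part of $a_h(\cdot,\cdot)$ collapse to a plain weighted Dirichlet energy on a narrow strip of mesh elements around $\Gamma_\ell$, after which the estimate becomes a routine application of standard FE tools (inverse inequality, transverse Poincar\'e, mass-matrix diagonal equivalence).

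First, every $\hat u = \sum_{i,k} \hat u_i^k \varphi_i^k \in \cW_0$ is globally continuous since the $\varphi_i^k$ are the usual hat functions of $V_\ell$; hence $\jump{\hat u}=0$ on $\Gamma_\ell$ and therefore $N^c(\hat u,\hat u)=N^s_{\lambda_N}(\hat u,\hat u)=0$. The same collapse applies to each diagonal entry, so
\[
  a_h(\hat u,\hat u) = \int_\Omega \mu\,|\nabla \hat u|^2\,dx,
  \qquad
  D_0^{(i,k)} = \int_\Omega \mu\,|\nabla \varphi_i^k|^2\,dx.
\]
Since $\mu_{\min}\le\mu\le\mu_{\max}$ pointwise, both quantities are equivalent, up to $\mu$-dependent constants, to their $\mu\equiv 1$ counterparts, so it suffices to prove the spectral equivalence for the plain Dirichlet energy.

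Next, I would observe that every $\hat u\in\cW_0$ is supported in the strip $S_\ell := \bigcup\{\,T\in\Tell\mid T\cap(\cV_1\cup\cV_2)\neq\emptyset\,\}$ and vanishes at all vertices of $\Tell$ outside $\cV_1\cup\cV_2$, in particular on $\partial S_\ell$. On such functions three classical estimates hold with constants depending only on the shape regularity of $\Tell$: (i) the inverse inequality $\|\nabla\hat u\|_{L^2}^2 \le C\,h_\ell^{-2}\|\hat u\|_{L^2}^2$; (ii) a transverse Poincar\'e inequality $\|\hat u\|_{L^2}^2 \le C\,h_\ell^{2}\|\nabla\hat u\|_{L^2}^2$, valid because $\hat u$ vanishes on $\partial S_\ell$ and $S_\ell$ has thickness $\mathcal{O}(h_\ell)$ transverse to $\Gamma_\ell$; and (iii) the mass-diagonal equivalence $\|\hat u\|_{L^2}^2 \sim h_\ell^d\,\|\hat\bu\|_{\ell^2}^2$. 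Together with the element-scaling identity $D_0^{(i,k)}\sim h_\ell^{d-2}$ (standard for linear elements), these chain into
\[
  a_h(\hat u,\hat u) \sim h_\ell^{-2}\|\hat u\|_{L^2}^2 \sim h_\ell^{d-2}\|\hat\bu\|_{\ell^2}^2 \sim \la D_0\hat\bu,\hat\bu\ra = (B_0\hat u,\hat u)_{L^2},
\]
which is the asserted equivalence, with constants independent of $\ell$ and of the cut.

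The main obstacle is to guarantee that the transverse Poincar\'e constant does \emph{not} degenerate with small cuts. This is resolved by the structural assumption~\eqref{ass3} together with the fact that $\cW_0$ uses only uncut hat functions: regardless of how $\Gamma_\ell$ slices a cut element, the mesh $\Tell$ still provides a full ``zero layer'' of vertices on $\partial S_\ell$ at which $\hat u=0$, so the Poincar\'e constant depends only on shape regularity of $\Tell$ and is insensitive to the interface location or to small cuts.
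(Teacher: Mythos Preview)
Your proof is correct and follows the same overall strategy as the paper: both observe that $\hat u\in\cW_0$ is continuous so the Nitsche terms vanish and $a_h(\hat u,\hat u)=\int_\Omega\mu|\nabla\hat u|^2$, and both then reduce to comparing a weighted Dirichlet energy with the diagonal, using $D_0^{(i,k)}\sim h_\ell^{d-2}$.

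The only substantive difference is in the lower bound $a_h\gtrsim(B_0\cdot,\cdot)$. The paper argues vertex by vertex: for each $x_{k,i}\in\cV_i$ it selects a simplex $T_{k,i}$ that contains $x_{k,i}$, has at least one vertex \emph{outside} $\cV_1\cup\cV_2$ (so $\hat u$ vanishes there), and with the $T_{k,i}$ pairwise disjoint; then $|\nabla\hat u|_{T_{k,i}}^2\gtrsim h^{-2}\hat u(x_{k,i})^2$ gives $a_h(\hat u,\hat u)\gtrsim\mu_{\min}h^{d-2}\|\hat\bu\|_{\ell^2}^2$ directly. Your route packages this as a global transverse Poincar\'e inequality on the strip $S_\ell$, exploiting $\hat u\in H^1_0(S_\ell)$ and the $\mathcal{O}(h_\ell)$ thickness of $S_\ell$. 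Both arguments rest on the same geometric fact---that every active vertex has an inactive neighbour, i.e.\ the strip is genuinely one element thin---and both leave this point at the level of a shape-regularity assertion. The paper's version is more constructive but needs the additional disjointness claim for the $T_{k,i}$ (which could be relaxed to finite overlap at no cost); your version is cleaner to state but hides the same combinatorics inside the Poincar\'e constant. Either way the constants are cut-independent because, as you correctly emphasise, $\cW_0$ involves only uncut nodal functions.
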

\begin{proof}
Take $w \in \cW_0$. We write $w= \sum_{i=1}^2 \sum_{k=1}^{m_i} w_{i}^k \varphi_{i}^k=: \sum_{i,k} w_{i}^k \varphi_{i}^k$, and $\hat P_0^{-1} w= \bw = (w_{i}^k)_{1\leq i \leq 2, 1 \leq k \leq m_i}$. Because $w$ is continuous we have $a_h(w,w)=a(w,w)= \int_\Omega \mu (\nabla w)^2 \, dx$. Due to the finite overlap property of finite element nodal functions we have
\[
 a_h(w,w) \lesssim \sum_{i,k} (w_{i}^k)^2 \int_\Omega \mu \big(\nabla \varphi_{i}^k\big)^2 \, dx = \la D_0 \bw,\bw\ra
\]
and thus
\begin{equation} \label{upperb}
 a_h(w,w) \lesssim (B_0 w,w)_{L^2}
\end{equation}
holds, which yields the ``$\lesssim$'' inequality in \eqref{specAA}. The constant in $\lesssim$ depends only on shape regularity properties of the triangulation. For every vertex $x_{k,i} \in \cV_i$, $i=1,2$, $k=1,\ldots,m_i$, we can select a simplex $T_{k,i} \in \T$ such that: 1. $x_{k,i}$ is a vertex of $T_{k,i}$, 2. at least one vertex of $T_{k,i}$ is \emph{not} in $\cV_1 \cup \cV_2$, 3. $T_{k,i} \cap T_{m,j}= \emptyset$ if $(k,i) \neq (m,j)$. See Figure \ref{fig:nodesets} for illustration.
\begin{figure}[h]
\centering
\includegraphics[width=0.4\textwidth]{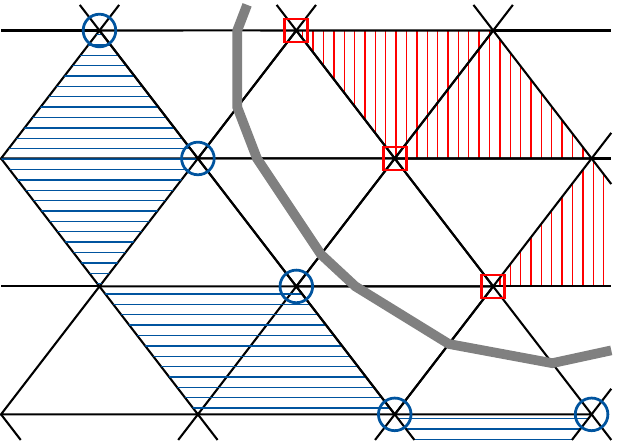}
\caption{Node sets $\mathcal{V}_1$ {\protect\tikz \protect\draw[color=rwthBlue] plot[mark=o, mark options={scale=2}] (0,0);} and $\mathcal{V}_2$ {\protect\tikz \protect\draw[color=red] plot[mark=square, mark options={scale=2}] (0,0);} with corresponding simplices $T_{k,1}$ {\protect\tikz \protect\draw[pattern=horizontal lines, pattern color=rwthBlue] (0,0) rectangle (0.3,0.3);} and $T_{k,2}$ {\protect\tikz \protect\draw[pattern=vertical lines, pattern color=red] (0,0) rectangle (0.3,0.3);}.}
\label{fig:nodesets}
\end{figure}
 Below we use (different) strictly positive constants, denoted by $c$, which depend only on the shape regularity of the triangulation. Note that for $w \in \cW_0$ the local linear function $w_{|T_{k,i}}$ has a zero value at at least one vertex of $T_{k,i}$, and thus
\[
  \nabla w_{|T_{k,i}} \cdot  \nabla w_{|T_{k,i}} \geq c \big( \frac{w(x_{k,i})}{h}\Big)^2.
\]
Hence,
\begin{equation} \label{ppl}
 \begin{split}
 a_h(w,w) & = \int_\Omega \mu \nabla w \cdot \nabla w \,dx \geq \mu_{\min} \sum_{i,k} \int_{T_{k,i}} \nabla w \cdot \nabla w \, dx  \\ & \geq \mu_{\min} c h^{-2} \sum_{i,k} |T_{i,k}|w(x_{k,i})^2 
   \\ & \geq  \mu_{\min}c h^{d-2} \sum_{i,k} w(x_{k,i})^2 = \mu_{\min} c h^{d-2} \la \bw,\bw \ra. 
\end{split}
\end{equation}
For the diagonal matrix $D_0$ we have 
\[
  (D_0)_{(k,i),(k,i)}= a_h(\varphi_{i}^k,\varphi_{i}^k) \leq \mu_{\max} \int_\Omega (\nabla \varphi_{i}^k)^2 \, dx \leq  \mu_{\max} c h^{d-2},
\]
and thus
\[
  (B_0 w,w)_{L^2}= \la D_0 \bw,\bw \ra \leq \mu_{\max} c h^{d-2} \la \bw,\bw \ra.
\]
Combining this with \eqref{ppl} we obtain the ``$\gtrsim$'' estimate in \eqref{specAA}.
\end{proof}
\ \\[1ex]
From these results it follows that for the standard Nitsche method the resulting local interface matrix $A^\Gamma$ has the property
\begin{equation}
 \frac{\la A^\Gamma \bu,\bu \ra}{\la D \bu,\bu\ra} \sim 1 \quad \text{for all}~~\bu\in \Bbb{R}^{|\cIG|},~\bu \neq 0.
\end{equation}
Hence, the diagonally preconditioned local interface matrix $D^{-1}A^\Gamma$ has a uniformly (w.r.t. discretization level and interface position) bounded condition number. A standard conjugate gradient method can be used as an efficient approximate solver for the system occurring in step 4 of the GS-IC preconditioner. 

We now briefly address the  parameter-free CutFEM, with bilinear form $\tilde a_h(\cdot,\cdot)$ as in  \eqref{eq:nitsche_paramfree_bform}. In \cite{lehrenfeld2016removing} it is shown that $\tilde a_h(\cdot,\cdot) \sim a_h(\cdot,\cdot)$ on the unfitted finite element space $V_\ell^\Gamma$ holds. Hence, the conditioning results derived above for the local interface matrix $A^\Gamma$ resulting from $a_h(\cdot,\cdot)$ (i.e., standard Nitsche) also hold for $\tilde a_h(\cdot,\cdot)$ (parameter-free Nitsche). 

These results on the conditioning of $A^\Gamma$ are illustrated in section~\ref{sectlarge}; cf. Table 
\ref{tab:mg_exsolve} and Table \ref{tab:condnum_agamma}.

\subsubsection{Direct sparse solver for  $A^\Gamma$} \label{sectdirect}
We discuss the computational complexity of solving $A^\Gamma \vv{x} = R^\Gamma \vv{r}$ in step 4 via a sparse direct solver. We consider $d=3$ and assume that $\Gamma$ is simply connected (as in the examples in section~\ref{sec:exp}). The  number of unknowns  in the system $A_\ell \bu = \mathbf{b}_\ell$ is of order $\mathcal{O}(h_\ell^{-3})$ and the computational costs of one Gauss-Seidel smoothing iteration is also  $\mathcal{O}(h_\ell^{-3})$. The local interface matrix $A^\Gamma$ has dimension $|\cIG| \times |\cIG|$, with $
|\cIG| =\mathcal{O}(h_\ell^{-2})$. Assume that the vertices $x_{k,i}$, $i=1,2$, $k=1,\ldots, m_i$, are ordered with a breadth first search traversal of the corresponding triangulation (starting from an arbitrary vertex). The standard finite element mass matrix (with this ordering), i.e., $M \in \Bbb{R}^{\frac12|\cIG|\times\frac12|\cIG|}$ with entries $M_{(k,i),(m,j)}= \int_\Omega \varphi_{i}^k \varphi_{j}^m\, dx$ has a bandwidth of size $\mathcal{O}(h_\ell^{-1})$. In the system $ A^\Gamma \vv{x} = R^\Gamma \vv{r}$ we use a blocking of the unknowns with at each vertex $x_{k,i}$ the two unknowns corresponding to $\varphi_{i}^k$ and $\varphi_{i,\Gamma}^k$; cf. \eqref{Basis}. Hence, the matrix $A^\Gamma$ has a block-structure with
\[
  (A^\Gamma)_{(k,i),(m,j)}= \begin{pmatrix} a_h(\varphi_{j}^m,\varphi_{i}^k) & a_h(\varphi_{j,\Gamma}^m,\varphi_{i}^k) \\
                              a_h(\varphi_{j}^m,\varphi_{i,\Gamma}^k) & a_h(\varphi_{j,\Gamma}^m,\varphi_{i,\Gamma}^k)
                            \end{pmatrix}.
\]
Since for all three discretization methods  $M_{(k,i),(m,j)}=0$ (disjoint supports of the basis functions) implies  $(A^\Gamma)_{(k,i),(m,j)}=0$, it follows that $A^\Gamma$ has a bandwidth of size $\mathcal{O}(h_\ell^{-1})$. Thus the computation of a sparse Cholesky factorization of $A^\Gamma$ requires $\mathcal{O}(h_\ell^{-3})$ arithmetic operations, which is of the same order of magnitude as the costs for one Gauss-Seidel iteration.
A numerical illustration of this optimal computational complexity for the sparse direct solver is given in section~\ref{sectlarge}, cf. Table \ref{tab:fill_in}.

\section{Numerical experiments} \label{sec:exp}
In the numerical experiments we use the domain $\Omega = [0,2]^3$ and two different types of interfaces $\Gamma$ (cf. Figure \ref{fig:domains}): 
\begin{itemize}
\item planar interface: 
$
\Gamma_\text{plan}(x_\Gamma) := \{ (x, y, z)^T \in \Omega \mid x = x_\Gamma \}, 
$

\item spherical interface. For given $\vv{m} \in \Bbb{R}^3$, $r >0$, we define the level set function $\phi(\vv{x}):= \| \vv{x} - \vv{m} \|_2^2 - r^2$ and 
$
\Gamma_\text{spher} (r,\vv{m}) := \left\{  \vv{x} \in \Omega \mid \phi(\vv{x})=0\, \right\}. 
$ Unless stated otherwise, we use $\vv{m}= (1.03,1.02,1.01)$, $r=0.413$.
\end{itemize}

The initial mesh $\mathcal{T}_0$ is quasi-uniform with $6\cdot 4^3$ tetrahedra and $\mathcal{T}_\ell$ for $\ell>1$ is obtained by uniform refinement of $\mathcal{T}_\lmone$, i.e. $| \mathcal{T}_\ell | = 8 \cdot |\mathcal{T}_\lmone|$. For the spherical interface the interface approximation in the experiments is given by $\Gamma_\ell = \{ \vv{x} \in \Omega \mid (I_{\ell+1}\phi)(\vv{x}) = 0\}$, with $I_{\ell+1}$ the nodal interpolation in the standard finite element space $V_{\ell+1}$. This iso$P_2$ choice (instead of the linear interpolation on the current mesh $\Tell$) is motivated by the fact that the level set function $\phi$ is a $P_2$ finite element function. We obtain very similar results if we use $ I_{\ell}\phi$ instead of $ I_{\ell+1}\phi$.
This interface approximation satisfies \eqref{ass1}, \eqref{ass2} in Assumption~\ref{ass}.

In all experiments the stiffness matrix corresponds to the XFEM basis \eqref{defB} and the  prolongation matrix \eqref{matprol} is transformed to this basis. 
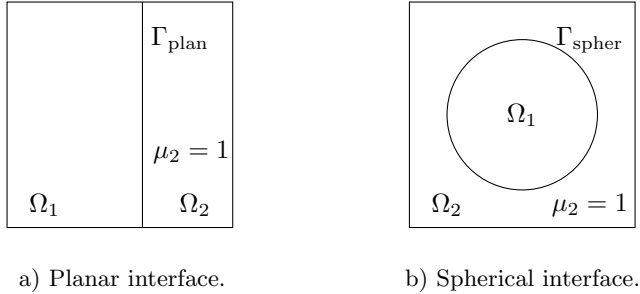
\begin{figure}[h!]
\centering
\begin{subfigure}[c]{0.4\textwidth}
\begin{center}
\begin{tikzpicture}
  \draw (0,0) rectangle ++(3,3);
  \draw (1.8,0) -- (1.8,3);
  \node at (2.3,2.5) {$\Gamma_\text{plan}$};
  \node at (0.5,0.3) {$\Omega_1$};  
  \node at (2.5,0.3) {$\Omega_2$};  
  \node at (2.45,1) {$\mu_2=1$};  
\end{tikzpicture}
\end{center}
\subcaption{Planar interface.}
\end{subfigure}
\begin{subfigure}[c]{0.4\textwidth}
\begin{center}
\begin{tikzpicture}
  \draw (0,0) rectangle ++(3,3);
  \draw (1.5,1.5) circle (1cm);
  \node at (2.4,2.5) {$\Gamma_\text{spher}$};
  \node at (1.5,1.5) {$\Omega_1$};    
  \node at (0.5,0.3) {$\Omega_2$};    
  \node at (2.4,0.3) {$\mu_2=1$};    
\end{tikzpicture}
\end{center}
\subcaption{Spherical interface.}
\end{subfigure}
\caption{Sketch of the problem domains.}
\label{fig:domains}
\end{figure}

\subsection{Discretization error} 
For testing the proposed discretizations \eqref{eq:nitscheBfa},  \eqref{eq:nitsche_paramfree_bform} and \eqref{eq:munitsche} we consider the case with $\Gamma_\text{spher}$, with a prescribed solution
\[
u^* = \alpha(\vv{x}) \, ( \| \vv{x} - \vv{m} \|_2^2 - r^2 ), ~~\text{with}~~
\alpha(\vv{x}) := \begin{cases} 
 \mu_2 & \text{for } \vv{x} \in \Omega_1, \\
  \mu_1 & \text{for } \vv{x} \in \Omega_2.
\end{cases}
\]
We use $\mu_2=1$ for all experiments.

Hence, $u^*$ fulfills the equations \eqref{eq:ellipIf1}--\eqref{eq:ellipIf4} when setting $u_D = u^*$ and $f=-6\mu_1\mu_2$.  Table \ref{tab:conv_hansbo_mu2}--\ref{tab:conv_coeff_mu2} show the convergence results for various choices of $\mu_1$ on different refinement levels for the three discretizations. As measure for the convergence we use the $L^2$-error $e(\ell) := \| u_{h_{\ell}} - u^* \|_{0,\Omega}$.

\begin{table}[h]
\vspace{0.5cm}
\begin{tabular}{ r r r r  r r r  r r}
 \toprule
 & \multicolumn{2}{c}{$\mu_1=0.9$}  & \multicolumn{2}{c}{$\mu_1=0.1$} & \multicolumn{2}{c}{$\mu_1=10^{-3}$} & \multicolumn{2}{c}{$\mu_1=10^{-5}$} \\
 \cmidrule(r){2-3} \cmidrule(l){4-5} \cmidrule(l){6-7} \cmidrule(l){8-9}
 \multicolumn{1}{ l }{$\ell$} & \multicolumn{1}{c}{$e(\ell)$} &
\multicolumn{1}{l }{e.o.c.} & \multicolumn{1}{c}{$e(\ell)$} &
\multicolumn{1}{l }{e.o.c.} & \multicolumn{1}{c }{$e(\ell)$} &
\multicolumn{1}{l }{e.o.c.} & \multicolumn{1}{c }{$e(\ell)$} &
    \multicolumn{1}{l }{e.o.c.} \\  \midrule
0&3.27E-01&    &4.83E-02&    &4.38E-02&    &4.41E-02& \\
1&8.20E-02&2.00&1.69E-02&1.51&4.07E-02&0.11&4.42E-02&0.00 \\
2&2.05E-02&2.00&3.98E-03&2.09&2.51E-02&0.69&3.46E-02&0.35 \\
3&5.14E-03&2.00&9.47E-04&2.07&8.58E-03&1.55&2.01E-02&0.78 \\
4&1.28E-03&2.00&2.27E-04&2.06&1.72E-03&2.31&1.05E-02&0.94 \\ \bottomrule
\end{tabular}
\caption{$L^2$-errors for $\uellg$ obtained by \eqref{eq:nitscheBfa} with $\lambda_N=10$ for various choices of $\mu_1$ with corresponding convergence order. Spherical interface.}
\label{tab:conv_hansbo_mu2}
\end{table}

\begin{table}[h]
\begin{tabular}{ r r r r  r r r  r r}
 \toprule
 & \multicolumn{2}{c}{$\mu_1=0.9$}  & \multicolumn{2}{c}{$\mu_1=0.1$} & \multicolumn{2}{c}{$\mu_1=10^{-3}$} & \multicolumn{2}{c}{$\mu_1=10^{-5}$} \\
 \cmidrule(r){2-3} \cmidrule(l){4-5} \cmidrule(l){6-7} \cmidrule(l){8-9}
 \multicolumn{1}{ l }{$\ell$} & \multicolumn{1}{c}{$e(\ell)$} &
\multicolumn{1}{l }{e.o.c.} & \multicolumn{1}{c}{$e(\ell)$} &
\multicolumn{1}{l }{e.o.c.} & \multicolumn{1}{c }{$e(\ell)$} &
\multicolumn{1}{l }{e.o.c.} & \multicolumn{1}{c }{$e(\ell)$} &
    \multicolumn{1}{l }{e.o.c.} \\  \midrule
0&3.26E-01&    &4.40E-02&    &4.21E-02&    &4.41E-02& \\
1&8.19E-02&2.00&1.36E-02&1.69&3.27E-02&0.36&4.37E-02&0.01 \\
2&2.05E-02&2.00&3.45E-03&1.98&1.07E-02&1.60&3.35E-02&0.38 \\
3&5.14E-03&2.00&8.66E-04&2.00&2.16E-03&2.31&1.80E-02&0.89 \\
4&1.28E-03&2.00&2.16E-04&2.00&3.79E-04&2.51&7.29E-03&1.31 \\ \bottomrule
\end{tabular}
\caption{$L^2$-errors for $\uellg$ obtained by \eqref{eq:nitsche_paramfree_bform} for various choices of $\mu_1$ with corresponding convergence order. Spherical interface.}
\label{tab:conv_param_mu2}
\end{table}

\begin{table}[h]
\begin{tabular}{ r r r r  r r r  r r}
 \toprule
 & \multicolumn{2}{c}{$\mu_1=0.9$}  & \multicolumn{2}{c}{$\mu_1=0.1$} & \multicolumn{2}{c}{$\mu_1=10^{-3}$} & \multicolumn{2}{c}{$\mu_1=10^{-5}$} \\
 \cmidrule(r){2-3} \cmidrule(l){4-5} \cmidrule(l){6-7} \cmidrule(l){8-9}
 \multicolumn{1}{ l }{$\ell$} & \multicolumn{1}{c}{$e(\ell)$} &
\multicolumn{1}{l }{e.o.c.} & \multicolumn{1}{c}{$e(\ell)$} &
\multicolumn{1}{l }{e.o.c.} & \multicolumn{1}{c }{$e(\ell)$} &
\multicolumn{1}{l }{e.o.c.} & \multicolumn{1}{c }{$e(\ell)$} &
    \multicolumn{1}{l }{e.o.c.} \\  \midrule
0&3.26E-01&    &4.59E-02&    &2.49E-02&    &2.49E-02& \\
1&8.44E-02&1.95&1.65E-02&1.47&1.32E-02&0.91&1.32E-02&0.91 \\
2&2.08E-02&2.02&4.00E-03&2.05&3.14E-03&2.08&3.14E-03&2.08 \\
3&5.18E-03&2.01&9.47E-04&2.08&7.24E-04&2.12&7.23E-04&2.12 \\
4&1.29E-03&2.01&2.28E-04&2.05&1.70E-04&2.09&1.70E-04&2.09 \\ \bottomrule
\end{tabular}
\caption{$L^2$-errors for $\uellg$ obtained by \eqref{eq:munitsche} with stabilization parameter $\varepsilon_g=0.1$ and $\lambda_N=10$ for various choices of $\mu_1$ with corresponding convergence order. Spherical interface.}
\label{tab:conv_coeff_mu2}
\end{table}

The methods \eqref{eq:nitscheBfa} and \eqref{eq:nitsche_paramfree_bform}  show optimal order of convergence only for modest contrast in the viscosities, whereas \eqref{eq:munitsche} is robust for the whole range (that we considered).
We obtain similar results when varying $\mu_1$ instead of $\mu_2$ and keeping $\mu_2$ fixed, and therefore those tables are not included here.

\begin{remark} \label{Remcondnumber}  \rm For the case $\mu_1=0.1$ we computed the condition numbers on the levels $\ell=3$ and $4$ for the three methods. For the stiffness matrices corresponding to the discretizations \eqref{eq:nitscheBfa}, \eqref{eq:nitsche_paramfree_bform}, \eqref{eq:munitsche} the condition numbers for $\ell=3$ are $4.5 \cdot 10^9$, $4.2 \cdot 10^9$ and $3.7 \cdot 10^3$, and for $\ell=4$ these are $3.1 \cdot 10^{10}$,  $2.9 \cdot 10^{10}$ and $3.9 \cdot 10^3$.  This illustrates the poor conditioning due to small cuts, for the methods without ghost penalty stabilization. It is known from the literature that for linear (cut) finite elements this deterioration can be strongly damped by diagonal scaling of the stiffness matrix. If we consider the diagonally scaled stiffness matrix of the three methods the condition numbers for $\ell=3$ are 
$4.1 \cdot 10^{2}$, $4.9 \cdot  10^{2}$ and $7.6 \cdot 10^2$ respectively, and for $\ell=4$ these are $1.6\cdot 10^3$,  $1.9\cdot 10^3$ and $3.1 \cdot 10^3$. 
\end{remark}

\subsection{Multigrid solver for small coefficient jumps}
We present results of  experiments with the multigrid method introduced in section \ref{sectMG}.  First, we consider the discretizations without additional stabilization, i.e. \eqref{eq:nitscheBfa} and \eqref{eq:nitsche_paramfree_bform}, which, due to a lack of robustness with respect to the interface location, may result  in (very) poorly conditioned system matrices. Every multigrid iteration consists of one V-cycle with 2 pre- and post-smoothing steps of the Gauss-Seidel smoother. The iteration is stopped once a relative residual (in the Euclidean norm) of $10^{-8}$ is reached.
We first consider the problem \eqref{eq:ellipIf} with a planar interface. In this case  there is no error in the interface approximation and $\Gamma_\ell=\Gamma_\text{plan}$  on all levels. Therefore the finite element spaces are nested, i.e. $\Vellmgl \subset \Vellgl$ for $\ell=1,2, \ldots$, and  the prolongation corresponds to simple injection based on the nesting of the spaces.  

We conduct the experiments with $x_\Gamma = 1.321$ for $\Gamma_\text{plan}$, the right-hand-side  $f=xyz$ and boundary data $u_D = 0$ in problem \eqref{eq:ellipIf}. Table \ref{tab:mg_plan_hnitsche} shows the multigrid iteration numbers for \eqref{eq:nitscheBfa} with $\lambda_N=10$ and varying $\mu$-ratios.
\begin{center}
 \vspace{0.5cm}
\begin{tabular}{ r r r r r }
\toprule
\multicolumn{1}{ l }{$\ell$} & \multicolumn{1}{c }{$\mu_1=0.9$} & \multicolumn{1}{c }{$\mu_1=0.5$} & \multicolumn{1}{c }{$\mu_1=0.1$} & \multicolumn{1}{c }{$\mu_1=0.01$} \\ \midrule
1 & 8 & 10 & 31 & 72 \\  
2 & 10 & 10 & 24 & 127 \\  
3 & 11 & 11 & 36 & 297 \\  
4 & 11 & 11 & 25 & 219 \\   \bottomrule
\end{tabular}
\captionof{table}{\eqref{eq:nitscheBfa}: Multigrid iteration numbers for various coefficient ratios and fixed $x_\Gamma=1.321$.}
\label{tab:mg_plan_hnitsche}
 \vspace{0.5cm}
\end{center}

We  observe that for moderate coefficient jumps the multigrid iteration numbers are essentially independent of the discretization level. However, the \eqref{eq:nitscheBfa} discretization requires the penalty parameter $\lambda_N$ which has a strong influence on the iteration numbers. The influence of $\lambda_N$ is illustrated in Table \ref{tab:mg_hnitsche_lam} for a fixed discretization level $\ell=2$ and the moderate coefficient ratio $\mu_1/\mu_2 = 0.5$.

\begin{center}
 \vspace{0.5cm}
\begin{tabular}{ l r r r r r }
\toprule
$\lambda_N$ & 1 & 10 & 20 & 100 & 1000 \\  \midrule 
iterations  & 11 & 10 & 13 & 43 & 143 \\  \bottomrule
\end{tabular}
\captionof{table}{Multigrid iteration numbers for different $\lambda_N$ and fixed $x_\Gamma=1.321$, $\ell=2$ and $\mu_1 = 0.5$.}
\label{tab:mg_hnitsche_lam}
 \vspace{0.5cm}
\end{center}
Due to the strong dependence of the multigrid iteration numbers on $\lambda_N$, we repeat the experiment from Table \ref{tab:mg_plan_hnitsche} but change the discretization from \eqref{eq:nitscheBfa} to \eqref{eq:nitsche_paramfree_bform} where the penalty term is implicitly determined. 

\begin{center}
 \vspace{0.5cm}
\begin{tabular}{ r r r r r }
\toprule
\multicolumn{1}{ l }{$\ell$} & \multicolumn{1}{c }{$\mu_1=0.9$} & \multicolumn{1}{c }{$\mu_1=0.5$} & \multicolumn{1}{c }{$\mu_1=0.1$} & \multicolumn{1}{c }{$\mu_1=0.01$} \\ \midrule
 1 & 8 & 8 & 18 & 93 \\  
2 & 10 & 10 & 12 & 77 \\  
3 & 10 & 10 & 16 & 144 \\  
4 & 11 & 11 & 15 & 99 \\ \bottomrule
\end{tabular}
\captionof{table}{\eqref{eq:nitsche_paramfree_bform}: Multigrid iteration numbers for various coefficient ratios and fixed $x_\Gamma=1.321$.}
\label{tab:mg_plan_pnitsche}
 \vspace{0.5cm}
\end{center}
The iteration numbers in Table \ref{tab:mg_plan_pnitsche} are almost identical to the ones in Table \ref{tab:mg_plan_hnitsche} for the coefficient jumps $\mu_1/\mu_2 \in \{0.9, 0.5\}$ and are  more robust for larger contrast. Due to the benefits of the parameter-free discretization method \eqref{eq:nitsche_paramfree_bform}, in the experiments below for the discretizations \emph{without} stabilization we restrict to this method.

\subsection*{Approximate interface}
In the above experiments the interface $\Gamma$ was linear and hence on the different levels $\ell$  we have $\Gamma_\ell = \Gamma$.
Now we consider the test problem described above with $\Gamma_\text{spher}$.  In this case the interface approximations on different levels vary, i.e., $\Gamma_\lmone \neq \Gamma_\ell$ and therefore the finite element spaces are not nested:  $\Vellmgl \not \subset \Vellgl$. 
Table \ref{tab:mg_circ_if} shows the multigrid iteration numbers for varying coefficient ratios.

\begin{center}
 \vspace{0.5cm}
\begin{tabular}{ r r r r r }
\toprule
\multicolumn{1}{ l }{$\ell$} & \multicolumn{1}{c }{$\mu_1=0.9$} & \multicolumn{1}{c }{$\mu_1=0.5$} & \multicolumn{1}{c }{$\mu_1=0.1$} & \multicolumn{1}{c }{$\mu_1=0.01$} \\ \midrule
1 & 8 & 8 & 18 & 89 \\  
2 & 10 & 10 & 17 & 101 \\  
3 & 11 & 11 & 14 & 88 \\  
4 & 11 & 11 & 16 & 118 \\  \bottomrule
\end{tabular}
\captionof{table}{\eqref{eq:nitsche_paramfree_bform}: Multigrid iteration numbers for various coefficient ratios and $\Gamma_\text{spher}$.}
\label{tab:mg_circ_if}
 \vspace{0.5cm}
\end{center}
We  observe that the results are similar to the exact interface case  in Table \ref{tab:mg_plan_pnitsche}. 
This is due to the use of a canonical prolongation operator $p_\ell: \Vellmgl \to  \Vellgl $. 

To further investigate the multigrid robustness with respect to the interface position, we fix the coefficient ratio $\mu_1/\mu_2 = 0.5$. We vary the interface position for $\Gamma_\text{spher}$ by changing the origin of the circle $\vv{m} = \vv{m}_0 + \delta \vv{e}$ with $\vv{m}_0 = (1.03,1.02,1.01)^T$, $\vv{e} = (1,1,1)^T$ and variable $\delta$. The results for $\ell=1,\ldots,4$ are displayed in Table \ref{tab:mg_iface_pos}.
\begin{center}
 \vspace{0.5cm}
\begin{tabular}{ r r r r r }
\toprule
\multicolumn{1}{ l }{$\ell$} & \multicolumn{1}{c }{$\delta = 0$} & \multicolumn{1}{c }{$\delta = 0.1$} & \multicolumn{1}{c }{$\delta=0.2$} & \multicolumn{1}{c }{$\delta=0.3$} \\ \midrule
1 & 8 & 9 & 8 & 8 \\  
2 & 10 & 10 & 10 & 10 \\  
3 & 11 & 11 & 11 & 11 \\  
4 & 11 & 11 & 11 & 11 \\  \midrule
$\kappa_2(D^{-1}A_4)$ & $1.9 \cdot 10^3$ & $2.0 \cdot 10^3$ & $2.0 \cdot 10^3$ & $2.1 \cdot 10^{3}$ \\
$\kappa_2(A_4)$ & $1.7 \cdot 10^9$ & $9.1 \cdot 10^9$ & $5.0 \cdot 10^{10}$ & $3.0 \cdot 10^{12}$ \\ \bottomrule
\end{tabular}
\captionof{table}{\eqref{eq:nitsche_paramfree_bform}: Multigrid iteration numbers for various interface positions and condition numbers for $\ell=4$. Spherical interface.}
\label{tab:mg_iface_pos}
 \vspace{0.5cm}
\end{center}
The multigrid iteration numbers are (almost) the same for every interface position and  we conclude that the rate of convergence of the method is not sensitive with respect to the interface location. The condition numbers are large and (strongly) depend on $\delta$; cf. last row in Table \ref{tab:mg_iface_pos}. A diagonal scaling of the stiffness matrix leads to an enormous reduction of the condition number (cf. Remark~\ref{Remcondnumber}) and less sensitivity with respect to the interface location.

\subsection{Multigrid solver for large coefficient jumps} \label{sectlarge}
As seen in the previous section, for increasing coefficient ratios the multigrid iteration numbers deteriorate. Therefore, we repeat the experiment from Table \ref{tab:mg_circ_if}, i.e., for the \eqref{eq:nitsche_paramfree_bform} discretization of the test problem with the spherical interface, but instead of a standard Gauss-Seidel smoother we use the GS-IC smoother from algorithm \ref{alg:ifAsSmooth}. Table \ref{tab:mg_exsolve} shows the multigrid iteration numbers. In brackets for small to medium contrast we put the maximum number of diagonally preconditioned CG iterations to reach a relative tolerance of $10^{-2}$ for the solution of $A^\Gamma \vv{x} = R^\Gamma \vv{r}$. For a larger contrast it is more efficient to use a direct solver for the interface matrix, as too many CG iterations would be required (e.g. 496 iterations for $\ell=4$ and $\mu_1=10^{-5}$).

\begin{center}
\vspace{0.5cm}
\begin{tabular}{ r r r r r r r r}
 \toprule
\multicolumn{1}{ l }{$\ell$} & $\mu_1=0.9$ & $0.5$ & $0.1$ &$0.01$ & $0.001$ &  $10^{-4}$ &  $10^{-5}$\\  \midrule
1 & 7 (10)& 7 (10) & 7 (15) &7 (31) & 7 &7 & 7\\
2 & 9 (10)& 9 (10)& 9 (15)&9 (37)& 9 &9 & 9\\
3 & 10 (11)& 10 (11)& 10 (15)&10 (38)& 11&17 & 18\\
4 & 10 (12)& 10 (12)& 10 (15)&10 (42)& 12&21 & 29\\  \bottomrule
\end{tabular}
\captionof{table}{\eqref{eq:nitsche_paramfree_bform}: Multigrid iteration numbers for various coefficient ratios with GS-IC smoother. The maximum number of CG iterations required to solve the system (\texttt{tol}=$10^{-2}$) with $A^\Gamma$ are displayed in brackets. Spherical interface. }
\label{tab:mg_exsolve}
\vspace{0.5cm}
\end{center}
We observe that multigrid iteration numbers are robust for much larger coefficient ratios than with a standard Gauss-Seidel smoother. For small to medium contrast, the iterations required to solve the systems with the interface matrix $A^\Gamma$ are very low and almost constant w.r.t. the refinement level $\ell$. This confirms the uniform (in $\ell$) condition number bounds derived in section~\ref{sectcond}. For larger $\mu$-contrast the multigrid method  is less robust. 

\begin{remark} \rm 
Solving the systems with $A^\Gamma$ approximately with a tolerance of $10^{-2}$ for the relative residual yields (almost) the same multigrid iteration numbers as a direct solve.
\end{remark}

In Table \ref{tab:condnum_agamma} we show  condition numbers of the diagonally scaled interface matrix needed within GS-IC smoother.  These  confirm the theoretical findings given in section \ref{sectlocalA}. For $\mu_1=10^{-2}$ the almost constant small condition number is reflected by almost constant small iteration numbers of the interface CG solver; on the other hand, for  $\mu_1=10^{-4}$, the condition numbers are much larger leading to unacceptably high iteration numbers of the interface CG solver. For such high contrast cases we recommend to use a sparse direct solver.

\begin{center}
\vspace{0.5cm}
\begin{tabular}{ r  r r  r r }
 \toprule
 & \multicolumn{2}{c}{\eqref{eq:nitsche_paramfree_bform}: $\kappa_2(D^{-1}A^\Gamma_\ell)$}  & \multicolumn{2}{c}{\eqref{eq:nitscheBfa}: $\kappa_2(D^{-1}A^\Gamma_\ell)$}  \\
 \cmidrule(r){2-3} \cmidrule(l){4-5} 
 \multicolumn{1}{ l }{$\ell$} & \multicolumn{1}{c}{$\mu_1=0.01$} &
\multicolumn{1}{l }{$\mu_1=10^{-4}$} & \multicolumn{1}{c}{$\mu_1=0.01$} &
\multicolumn{1}{l }{$\mu_1=10^{-4}$}  \\  \midrule
1& 300 & 1343 & 213 & 309     \\
2& 224 & 3533 & 235 & 916 \\
3& 243 & 6420 & 251 & 2904 \\
4& 268 & 8225 & 342 & 6911 \\ \bottomrule
\end{tabular}
\captionof{table}{Condition number of the diagonally scaled interface matrix $A^\Gamma$ in algorithm \ref{alg:ifAsSmooth} for various levels $\ell$ and different $\mu_1$.}
\label{tab:condnum_agamma}
\vspace{0.5cm}
\end{center}
In Table~\ref{tab:fill_in}  we give numbers for the fill-in in the lower triangular matrix  $L_\ell$ of the sparse Cholesky decomposition $A_\ell^\Gamma= L_\ell L_\ell^T$ of the local interface matrix. These results confirm that the direct solver has optimal computational efficiency, cf.~section~\ref{sectdirect}. 
\begin{center}
 \vspace{0.5cm}
\begin{tabular}{ r r r}
\toprule
\multicolumn{1}{ l }{$\ell$} & $ | \mathcal{N}(A_\ell) |$ & $|\mathcal{N}(L_\ell)| / |\mathcal{N}(A_\ell)|$ \\ \midrule
      1 &    6,835 &  0.71  \\
      2 &   56,038 &    0.58  \\
      3 &   466,267 &    0.45  \\
      4 &   3,824,281 &    0.33  \\ \bottomrule
\end{tabular}
\captionof{table}{Number of nonzero entries in $A_\ell$ ($ \mathcal{N}(A_\ell)$) and comparison with number of nonzero entries of sparse Cholesky factor in $A_\ell^\Gamma=L_\ell  L_\ell^T$. }
\label{tab:fill_in}
 \vspace{0.5cm}
\end{center}

\subsubsection*{Coefficient stable and interface stabilized discretization} As seen in the experiments above, the proposed multigrid method is only robust up to a certain contrast in the diffusion coefficient. The deterioration in the multigrid convergence rate may be related to the lack of robustness of the discretization  method \eqref{eq:nitsche_paramfree_bform}.
Clearly, for large coefficient jumps it is better to use a discretization method that is robust with respect to the coefficient ratio, such as \eqref{eq:munitsche}. We apply the multigrid method, with GS-IC smoother, to the linear system resulting from the  discretization method \eqref{eq:munitsche}. Results are shown in Table \ref{tab:mg_circ_cstab}. 

\begin{center}
 \vspace{0.5cm}
\begin{tabular}{ r r r r r r}
\toprule
\multicolumn{1}{ l }{$\ell$} & \multicolumn{1}{c }{$\mu_1=0.9$} & \multicolumn{1}{c }{$\mu_1=0.1$} & \multicolumn{1}{c }{$\mu_1= 10^{-3}$} & \multicolumn{1}{c }{$\mu_1=10^{-5}$} & \multicolumn{1}{c }{$\mu_1=10^{-7}$}\\ \midrule
1 & 7 & 7 & 7 & 7 & 7 \\  
2 & 9 & 9 & 9 & 9  & 9\\  
3 & 10 & 10 & 10 & 10 &10 \\  
4 & 11 & 11 & 11 & 11  & 11\\  \bottomrule
\end{tabular}
\captionof{table}{\eqref{eq:munitsche}: Multigrid iteration numbers with GS-IC smoother for various coefficient ratios, with stabilization parameter $\varepsilon_g = 0.1$ and $\lambda_N=10$. Spherical interface.}
\label{tab:mg_circ_cstab}
 \vspace{0.5cm}
\end{center}
We observe  a very robust behavior of the multigrid method, with iteration numbers that are essentially independent of the value of $\mu_1$.
 Finally we repeat the experiment from Table \ref{tab:mg_hnitsche_lam} for the \eqref{eq:munitsche} discretization with the GS-IC smoother (however for the spherical interface $\Gamma_\text{spher}$ instead of the planar one).  Results are  shown in Table~\ref{tab:mg_cstab_lambda}.

\begin{center}
 \vspace{0.5cm}
\begin{tabular}{ l r r r r r }
\toprule
$\lambda_N$ & 1 & 10 & 20 & 100 & 1000 \\  \midrule 
iterations ($\mu_1=0.1$)  & 12 & 9 & 9 & 9 & 9 \\
iterations ($\mu_1= 10^{-5}$)  & div & 9 & 9 & 9 & 9 \\  \bottomrule
\end{tabular}
\captionof{table}{\eqref{eq:munitsche}: Multigrid iteration numbers with GS-IC smoother for $\Gamma_\text{spher}$ with stabilization parameter $\varepsilon_g = 0.1$ on fixed $\ell=2$ and  for different coefficient ratios and various $\lambda_N$.}
\label{tab:mg_cstab_lambda}
 \vspace{0.5cm}
\end{center}
The method diverges for $\mu_1 = 10^{-5}$ with stabilization parameter value $\lambda_N=1$, which is probably due to the fact that the discretization is unstable for $\lambda_N$ too small. For all other cases, we observe that the multigrid solver is robust with respect to variations in $\lambda_N$. The robustness is due to the use of the (robust) GS-IC smoother. Applying the latter to the \eqref{eq:nitscheBfa} discretization yields similar results.

\begin{remark} \label{RemGalerkinapproach} \rm  We obtain almost identical multigrid iteration numbers for all three discretizations when using the Galerkin approach for the coarse grid matrices $A_{\ell-1}=\br_{\ell}A_{\ell} \bp_{\ell}$. It can be shown that these Galerkin coarse grid matrices have no  extra fill-in  compared to the case of direct discretization.
\end{remark}
 
\section{Conclusions}
We considered the discrete problems resulting from known  unfitted finite element discretizations of the Poisson interface problem. The main topic of the paper is the development of an efficient  multigrid method for this class of discrete problems. A new prolongation operator for the unfitted finite element space was constructed based on the injection of standard finite element spaces on locally extended subdomains. Using level-dependent interface approximations for the discretization of the coarse grid matrices, which can be seen as a kind of quadrature error, did not affect the multigrid convergence. For the case of large contrast in the diffusion coefficients a novel interface smoother has been presented. For small contrast a standard Gauss-Seidel smoothing is already sufficient  to obtain an efficient multigrid method. As a strong dependence of the multigrid convergence rate on the penalty parameter $\lambda_N$ was observed, the parameter free discretization \eqref{eq:nitsche_paramfree_bform} turned out to be preferable. 
Even though the condition number of the resulting stiffness matrix strongly depends on the interface position (for the case without stabilization), the resulting multigrid method did not show any dependence on that. For the case of large coefficient jumps the stabilized discretization \eqref{eq:munitsche} turns out to be better (as known from the literature). The application of the smoother with local interface correction results in  a robust multigrid method. Properties of the interface smoother, concerning conditioning and fill-in in the Cholesky decomposition, are derived. These results imply  optimal arithmetic costs for one multigrid iteration.

\section*{Acknowledgments}
We thank Christoph Lehrenfeld for the fruitful discussions concerning the parameter-free Nitsche discretization. Financial support through the DFG project Re 1461/6-1 is gratefully acknowledged.

\bibliographystyle{siam}
\bibliography{literatur}

\end{document}